\newtheorem{theorem}{Theorem}[section]
\newtheorem{lemma}[theorem]{Lemma}
\newtheorem{conjecture}[theorem]{Conjecture}
\newtheorem{corollary}[theorem]{Corollary}
\newtheorem{definition}[theorem]{Definition}
\newcommand{\vmin}{v^-_\lozenge}
\newcommand{\vplus}{v^+_\lozenge}
\newcommand{\ttg}{two-terminal graph\xspace}
\newcommand{\ttgs}{two-terminal graphs\xspace}
\newcommand{\dip}{dipole\xspace}
\newcommand{\dips}{dipoles\xspace}
\def\({\left(}
\def\){\right)} 
\def\[{\left[}
\def\]{\right]}
\newcommand{\R}{\mathbb{R}}
\setlist{itemsep=0pt,topsep=5pt}
\title{Density of Zeros of the Tutte Polynomial }
\author{Seongmin Ok\thanks{Korea Institute for Advanced Study, email: \texttt{seong@kias.re.kr}}\; and Thomas J. Perrett\thanks{Technical University of Denmark, email: \texttt{tper@dtu.dk}. Supported by ERC Advanced Grant GRACOL, project number 320812.}}
\date{\today}
\begin{document}
\maketitle

\begin{abstract}
The Tutte polynomial of a graph is a two-variable polynomial whose zeros and evaluations encode many interesting properties of the graph. In this article we investigate the zeros of the Tutte polynomials of graphs, and show that they form a dense subset of certain regions of the plane. This is the first density result for the zeros of the Tutte polynomial in a region of positive volume. Our result almost confirms a conjecture of Jackson and Sokal except for one region which is related to an open problem on flow polynomials.
\end{abstract}

\section{Introduction}	
Let $G$ be a graph with vertex set $V$ and edge set $E$. We allow $G$ to have loops and multiple edges. In this paper we consider the random cluster formulation of the Tutte polynomial of $G$, which is defined to be the polynomial

\begin{equation}\label{eq:BVT}
Z_G(q,v) = \sum_{A \subseteq E} q^{k(A)} v^{|A|},
\end{equation}
where $q$ and $v$ are commuting indeterminates, and $k(A)$ denotes the number of components in the graph $(V,A)$. We can retrieve the classical formulation of the Tutte polynomial $T_G(x,y)$ from $Z_G(q,v)$ by a simple change of variables as follows:

$$T_G(x,y) = (x-1)^{-k(E)}(y-1)^{-|V|} Z_G\big( (x-1)(y-1),y-1 \big).$$
Since we deal mainly with the formulation given by~\eqref{eq:BVT}, we will say that $Z_G(q,v)$ is the \emph{Tutte polynomial} of $G$, and $T_G(x,y)$ is the \emph{classical Tutte polynomial} of $G$.

The chromatic polynomial $P_G(q)$ is a well-studied specialisation of the Tutte polynomial, and can be obtained from~\eqref{eq:BVT} by setting $v = -1$. In particular, the zeros of the chromatic polynomials of graphs are of much interest due to their connection with graph colouring and statistical physics, see~\cite{Sok.MultiTutte}.  Notably, Tutte~\cite{Tut.01} showed that no zeros of the chromatic polynomials of graphs lie in the set $(-\infty, 0) \cup (0,1)$, and Jackson~\cite{Jac.32/27} showed that the same conclusion holds for the interval $(1, 32/27]$. On the other hand, Thomassen~\cite{Tho.RootsDense} proved the complementary result that such zeros are dense in the interval $(32/27, \infty)$. We also mention the result of Sokal~\cite{Sok.ComplexDense} which says that the \emph{complex} zeros of the chromatic polynomial form a dense subset of the complex plane.

Expanding on Jackson's study, Jackson and Sokal~\cite{Jac.Sok.ZeroFreeMultiTutte} identified a \emph{zero-free region} $R_1$ of the $(q,v)$ plane where the Tutte polynomial never has a zero. Using a multivariate version of the Tutte polynomial, their work led to a much better understanding of such zero-free regions, and further elucidated the origin of the number $32/27$. They conjectured that $R_1$ is the first in an inclusion-wise increasing sequence of regions $R_1, R_2, \dots$, such that for $i \geq 1$, the only $2$-connected graphs whose Tutte polynomials have a zero inside $R_i$ have fewer than $i$ edges. Jackson and Sokal also conjectured that this sequence converges to a limiting region $R^*$, outside of which the zeros of the Tutte polynomials of graphs are dense. The region $R^*$ is depicted by the unshaded region in Figure~\ref{fig:introBoth}.

We now state the conjecture of Jackson and Sokal precisely. Following~\cite{Jac.Sok.ZeroFreeMultiTutte}, let $\vplus(q)$ be the function describing the middle branch of the curve $v^3 - 2qv - q^2 = 0$ for $0 < q < 32/27$, see Figure~\ref{fig:introBoth} or~\cite[Figure 2]{Jac.Sok.ZeroFreeMultiTutte}. Also, let $\vmin(q)$ be defined by $\vmin(q) = q/ \vplus(q)$ for $0 < q < 32/27$.

\begin{conjecture}\emph{\cite{Jac.Sok.ZeroFreeMultiTutte}}\label{conj:main}
The zeros of the Tutte polynomials of graphs are dense in the following regions:
\begin{enumerate}[label=(\alph*)]
\item $q < 0$ and $v < -2$,
\item $q < 0$ and $0 < v < -q/2$,
\item $0 < q \leq 32/27$ and $v < \vmin$,
\item $0 < q \leq 32/27$ and $\vplus < v < 0$, and
\item $q > 32/27$ and $v < 0$.
\end{enumerate}
\end{conjecture}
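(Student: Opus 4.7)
The plan is to handle each of the five regions (a)--(e) separately by constructing an explicit family of \ttgs whose Tutte polynomial zeros accumulate densely in that region. The main tool is the theory of series--parallel composition of \ttgs: for a \ttg $H$ with terminals $s,t$, one can split $Z_H(q,v)$ into its \emph{same-component} part (subsets $A \subseteq E(H)$ in which $s$ and $t$ lie in one component of $(V(H),A)$) and its \emph{different-component} part. Both series and parallel composition of \ttgs act as simple Möbius-type transformations on the ratio of these two polynomials, so iterated composition of a small fixed set of \dip ``building blocks'' produces polynomial sequences $\{Z_{H_n}(q,v)\}$ that can be written as linear combinations of $n$-th powers of certain analytic functions of $q$ and $v$. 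The zero distributions of such sequences are governed by the Beraha--Kahane--Weiss theorem.

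Concretely, for each region I would first fix $q$ in the relevant sub-interval and construct a sequence of \ttgs---for instance, iterated parallel compositions of short series paths, of theta-type \dips, or combinations thereof---so that $Z_{H_n}(q,v) = \sum_{i=1}^{k} \alpha_i(q,v)\,\lambda_i(q,v)^n$ for explicit analytic functions $\alpha_i,\lambda_i$. The Beraha--Kahane--Weiss theorem then identifies the zero accumulation set (in $v$) as the locus where at least two of the $\lambda_i$ are tied for maximum modulus, which for well-chosen blocks is a union of real analytic arcs sweeping out a sub-interval of the target region. To promote this to a two-dimensional density statement in the $(q,v)$-plane, I would also vary $q$ inside the blocks, or superpose two recursions whose limit curves cross transversally at generic points; an implicit-function argument then yields density on an open subset, and by ranging the blocks over an exhaustive family one sweeps the entire target region.

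The principal obstacle is the region whose target lies immediately adjacent to the boundary curve $v^3 - 2qv - q^2 = 0$ of the Jackson--Sokal zero-free region. Natural series--parallel constructions produce limit curves that bump up against this boundary, and coaxing them to cover the target strip appears to require density statements for the zeros of flow polynomials, obtained (up to change of variables) from $Z_G$ along the line $v = -q$. Since the corresponding flow polynomial density result is not currently available, I would expect the method to establish density unconditionally in four of the five regions, leaving the fifth contingent on progress in flow polynomial density---matching the outcome flagged in the abstract. A secondary technical nuisance is that the sign and modulus constraints differ across (a)--(e), so the set of viable building blocks and the verification that the relevant $\lambda_i$ tie in absolute value must be carried out case-by-case, particularly in the negative-$q$ regions (a) and (b) where the coefficients $\alpha_i$ can have delicate cancellations.
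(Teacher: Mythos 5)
Your proposal takes a genuinely different route from the paper's, and while the overall architecture (series--parallel gadgets built from \dips and paths, case analysis region by region, an unavoidable flow-polynomial obstruction) matches the shape of the paper, the core zero-extraction mechanism does not, and the substitution introduces a real gap.

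The paper's engine is Thomassen's lemma on rational functions (Lemma~\ref{lem:functions}): given two gadgets with effective weights $a(q) = -1 - v_A(q,v)$, $b(q) = 1 + v_B(q,v)$ satisfying $0 < b < 1 < a$ and a generic independence condition, there exist $s,t \in \mathbb{N}$ (of prescribed parity) and $q_1$ in any specified short interval with $a(q_1)^s b(q_1)^t = |q_1 - 1|$. Putting $s$ copies of $A$ and $t$ copies of $B$ in parallel then produces a graph $G$ with a \emph{real} zero of $Z_G(\cdot,v)$ at $q_1$, and because $q_1$ can be placed in \emph{any} small interval, density in a two-dimensional region follows immediately from a pointwise existence statement (the complementary-pair condition). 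You instead propose Beraha--Kahane--Weiss. That theorem describes the accumulation set of (generally complex) zeros of a sequence $\sum_i \alpha_i \lambda_i^n$ as $n \to \infty$; the accumulation set is a one-real-dimensional equimodularity variety, and its intersection with the real $(q,v)$-plane is typically a curve or isolated points, not a two-dimensional region. Your suggestion to ``vary $q$ inside the blocks, or superpose two recursions whose limit curves cross transversally'' and then invoke an implicit-function argument is exactly where the hard work lies, and it is not sketched in enough detail to see that it goes through. In particular, you would need to exhibit a parameter family of gadgets whose BKW curves sweep out the target region and to control that the relevant zeros are real, not just close to the real plane. The paper avoids this entirely because Thomassen's lemma already contains a density statement with respect to $q$: the exponents $s,t$ play the role of the free parameter, and the Kronecker/Weyl-type argument hidden in Lemma~\ref{lem:functions} does the sweeping for you.

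A smaller inaccuracy: you locate the stubborn region as ``immediately adjacent to the boundary curve $v^3 - 2qv - q^2 = 0$''. That cubic only bounds the Jackson--Sokal zero-free region for $0 < q \le 32/27$. The region the paper cannot reach is $q > 4$, $v < -q$, and its relevance to flow polynomials comes from the specialisation $v = -q$ (the ray approached by the effective weights of long paths), not from the cubic. You correctly identify the flow-polynomial obstruction, but the geometry is off. Finally, note that the statement you were handed is the Jackson--Sokal conjecture itself; the paper does not prove it in full but establishes Theorem~\ref{thm:main}, which covers parts (a)--(d) entirely and splits (e) into $32/27 < q < 4$ (all $v<0$) plus $q>4$ with $-q < v < 0$, leaving $q>4$, $v<-q$ open. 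Any proposal should therefore be evaluated as an attempt at that partial result, and in that light the BKW promotion-to-2D step is the piece you would need to supply.
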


\begin{figure}
\centering
\includegraphics[scale=1]{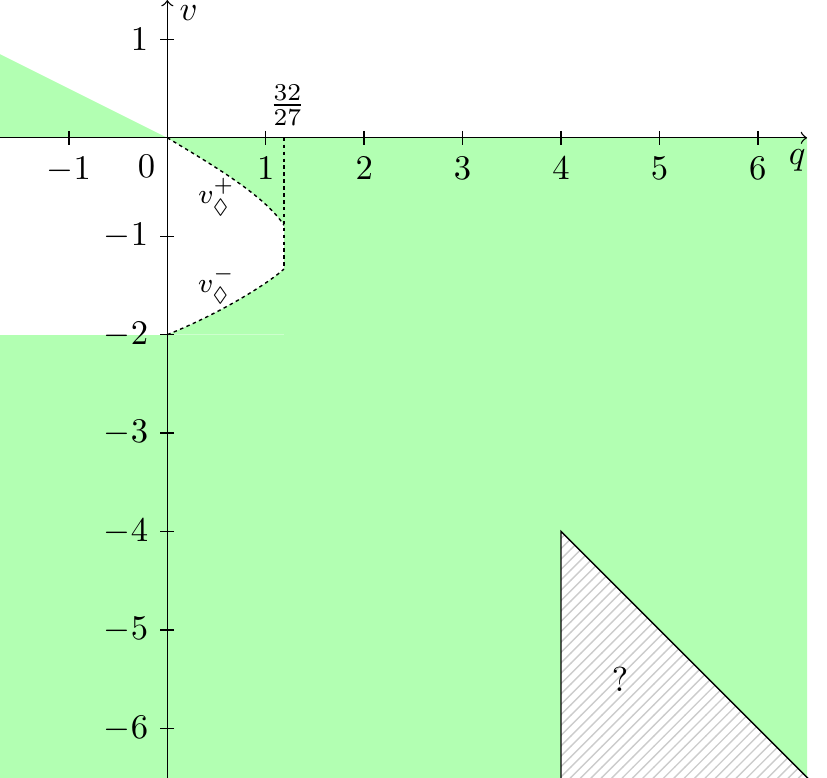}
\caption{Regions in Conjecture~\ref{conj:main}}\label{fig:introBoth}
\end{figure}

The union of the regions described in Conjecture~\ref{conj:main} is illustrated by the shaded and hatched area in Figure~\ref{fig:introBoth}.

In this paper, we prove Conjecture \ref{conj:main} in many cases. Our main tool is a technique of Thomassen~\cite{Tho.RootsDense}, which was originally developed for the chromatic polynomials of graphs, and which we extend to the Tutte polynomial.

\begin{comment}

 The regions in the following theorem are illustrated in Figure \ref{fig:intro}(b).

\begin{figure}
\centering
\begin{subfigure}[b]{0.45\textwidth}
\includegraphics[scale=0.7]{}
\caption{Conjecture \ref{conj:main}}
\end{subfigure}
\qquad
\begin{subfigure}[b]{0.45\textwidth}
\includegraphics[scale=0.7]{}
\caption{Theorem \ref{thm:main}}
\end{subfigure}
\caption{Regions in Conjecture \ref{conj:main} and Theorem \ref{thm:main}}
\label{fig:intro}
\end{figure}
\end{comment}

\begin{theorem}\label{thm:main}
The zeros of the Tutte polynomials of graphs are dense in the following regions:
\begin{enumerate}[label=(\alph*)]
\item $q < 0$ and $v < -2$,
\item $q < 0$ and $0 < v < -q/2$,
\item $0 < q \leq 32/27$ and $v < \vmin$,
\item $0 < q \leq 32/27$ and $\vplus < v < 0$, 
\item $32/27 < q < 4$ and $v < 0$, and
\item $q > 4$ and $-q < v < 0$.
\end{enumerate}
\end{theorem}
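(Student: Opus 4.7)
The plan is to extend Thomassen's technique for the chromatic polynomial to the bivariate polynomial $Z_G(q,v)$ via the machinery of \ttgs. For a \ttg $H$ with terminals $s,t$, I would split
\begin{equation*}
Z_H(q,v) = P_H(q,v) + Q_H(q,v),
\end{equation*}
where $P_H$ (resp.\ $Q_H$) sums $q^{k(A)} v^{|A|}$ over $A \subseteq E(H)$ with $s,t$ in the same (resp.\ different) component. The pair $(P_H,Q_H)$ contains exactly the information about $H$ needed to compute $Z$ of any graph obtained by gluing $H$ at its terminals, so the natural object to track is the ratio $\rho_H = P_H/Q_H$, viewed as a function of $(q,v)$.

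\textbf{Iterative construction and dynamics.} Parallel and series compositions of \ttgs induce linear transformations on the pair $(P_H, Q_H)$, and hence Möbius transformations on $\rho_H$, with coefficients polynomial in $q,v$. Fixing a suitable auxiliary \ttg or \dip $K$ and iterating $H \mapsto K \star H$ produces a sequence $H_0, H_1, \dots$ whose ratios satisfy $\rho_{H_{n+1}} = f_{q,v}(\rho_{H_n})$ for a Möbius map $f_{q,v}$ depending on $K$ and on $(q,v)$. For each region (a)--(f) and each target $(q_0, v_0)$, I would choose $K$ so that $f_{q_0,v_0}$ is conjugate on the real line to an irrational rotation of a circle or to a non-trivial hyperbolic expansion, so that a generic orbit is dense in an invariant arc. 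After appending a finishing gadget I can arrange that the condition $Z_{G_n}(q,v) = 0$ is equivalent to $\rho_{H_n}(q,v) = \rho^\ast$ for some distinguished value $\rho^\ast$ lying in that invariant set. Small \dips should suffice for the bulk of regions (e) and (f); finer building blocks (for instance, series chains of \dips, or gadgets shaped around the polynomial $v^3 - 2qv - q^2$) will likely be required for (a)--(d) near the boundary curves.

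\textbf{From orbits to zeros, and the main obstacle.} Density of the orbit of $\rho_{H_n}(q_0,v_0)$ together with continuity of $\rho_{H_n}$ in $(q,v)$ yields, by an intermediate value argument, actual real zeros of $Z_{G_n}$ in every neighbourhood of $(q_0,v_0)$, completing the proof. The hard step is the dynamical one: producing, for each $(q_0,v_0)$ in each of the six regions, an explicit \ttg $K$ such that $f_{q_0,v_0}$ is non-degenerate and has the required density property, and showing that this can be done uniformly as $(q_0,v_0)$ ranges over the region. The curves $v = \vmin(q)$ and $v = \vplus(q)$ describe exactly the locus where fixed points of the relevant $f_{q,v}$ turn parabolic for $0 < q \leq 32/27$, and an analogous parabolic bifurcation at $q = 4$ forces the case split (e)/(f) and the restriction $-q < v$ in (f). Indeed, in the missing region $q > 4,\ v < -q$, establishing density by the methods above seems to reduce to an open problem about roots of flow polynomials, which is precisely why the theorem stops where it does.
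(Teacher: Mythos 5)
Your object $\rho_H = P_H/Q_H$ is essentially the paper's effective weight $v_F$ (a short computation gives $v_F = q\rho_F$), so the basic bookkeeping is the right one; but the dynamical mechanism you propose is different from the paper's, and it is exactly the step you flag as ``hard'' that you have not carried out. You want a single gadget $K$ such that $H \mapsto K \star H$ induces a Möbius map $f_{q,v}$ on the ratio that is conjugate to an irrational rotation of an arc, whence dense orbits. But the two elementary compositions are affine: parallel composition acts affinely on $v$ (via $1+v_F \mapsto (1+v_F)(1+v_K)$) and series composition acts affinely on $q/v$ (via $1+q/v_F \mapsto (1+q/v_F)(1+q/v_K)$), so on the real line these are hyperbolic or parabolic, never elliptic, and iterating them gives geometric convergence to a fixed point or escape to infinity, not a dense orbit. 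One can certainly manufacture non-affine Möbius maps by alternating series and parallel steps, but you would then need to verify, for every $(q,v)$ in each of the six regions, that such a map has $|\mathrm{tr}| < 2$ and an irrational rotation number, and that a suitable target value $\rho^\ast$ lies in the invariant arc --- none of which you establish. This is a genuine gap, not just unpolished exposition.

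The paper sidesteps elliptic dynamics entirely. It uses a \emph{two}-gadget, \emph{two}-parameter construction: take $s$ copies of a gadget $A$ and $t$ copies of a gadget $B$, all in parallel, so that the relevant quantity becomes $(1+v_A)^s(1+v_B)^t$, and the root condition reduces (Lemma~\ref{lem:rootFind}) to hitting $c(q) = \pm(q-1)$ with a product $a(q)^s b(q)^t$ where $a > 1 > b > 0$. Density then follows from Thomassen's Lemma~\ref{lem:functions}, which trades the need for an elliptic map for the weaker requirement that $\log a(q)/\log b(q)$ be non-constant (and hence irrational somewhere nearby), verified via the asymptotics in Lemma~\ref{lem:effWpositive}. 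Your proposal has no counterpart to this non-degeneracy check. Finally, the real content of Section~4 --- exhibiting, for each region, a complementary pair $(A,B)$ of the right ``types'', including nontrivial gadgets borrowed from Goldberg--Jerrum and Jackson--Sokal for regions near the boundary curves and for $q > 2$ --- is left entirely open in your sketch. Your remarks on the curves $\vplus,\vmin$ as parabolic loci and on the flow-polynomial obstruction for $q > 4$, $v < -q$ are in the right spirit, but as written the argument does not prove any part of the theorem.
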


Thus, the only region of Jackson and Sokal's conjecture which is not covered by Theorem~\ref{thm:main} is the region defined by $q > 4$ and $v < - q$. This region is indicated by a question mark in Figure~\ref{fig:introBoth}. We later discuss the obstructions that arise in this region which are related to an open problem on the flow polynomials of graphs.

\section{The Multivariate Tutte Polynomial}

In this section we introduce the multivariate Tutte polynomial and briefly describe the advantages in using this more general version. We refer the reader to~\cite{Sok.MultiTutte} for a more detailed introduction. Let $G$ be a graph with vertex set $V$ and edge set $E$. The \emph{multivariate Tutte polynomial} of $G$ is the polynomial
\begin{equation}\label{eq:MVT}
Z_G(q,\textbf{v}) = \sum_{A \subseteq E} q^{k(A)} \prod_{e \in A} v_e,
\end{equation}
where $q$ and $\textbf{v} = \{v_e\}_{e \in E}$ are commuting indeterminates, and $k(A)$ denotes the number of components in the graph $(V,A)$. We say that $(G, \textbf{v})$ is a \emph{weighted graph} and $v_e$ is the \emph{weight} of the edge $e$. It can be seen from~\eqref{eq:BVT} that the random cluster formulation of the Tutte polynomial is obtained by setting all edge weights equal to a single indeterminate $v$. Despite being interested in the zeros of the Tutte polynomial, we will find it useful to consider the multivariate version. This point of view has proven to be particularly useful in studying the computational complexity of the evaluations \cite{Gol.Jer.Inapprox,Gol.Jer.InapproxPlanar,Gol.Jer.ComplexityTutte,Jae.Ver.Wel.Compl.Tutte}, and the zeros of the Tutte polynomial \cite{Jac.Sok.ZeroFreeMultiTutte,Sok.ComplexDense}.

The major advantage of using the multivariate Tutte polynomial is that, in certain circumstances, one can replace a subgraph by a single edge with an appropriate weight. Indeed, suppose $(G, \textbf{v})$ is a weighted graph and $F$ and $H$ are connected subgraphs of $G$ such that $V(F) \cup V(H) = V(G)$ and $V(F) \cap V(H) = \{x, y\}$. Let $F_{xy}$ denote the graph formed by identifying the vertices $x$ and $y$ in $F$ (the edges between $x$ and $y$ become loops) and let $F+xy$ denote the graph formed by adding an edge $xy$ to $F$. We also let $Z_{F+xy}(q ,\textbf{v}|_F,w)$ denote the multivariate Tutte polynomial of $F+xy$ where the new edge $xy$ has weight $w$. Finally, let $v_F$ denote the \emph{effective weight} of $F$ in $(G, \textbf{v})$, which is defined by

\begin{equation}\label{eq:effW}
1 + v_F =\frac{(q-1)Z_{F_{xy}}(q, \textbf{v}|_F)}{Z_{F+xy}(q, \textbf{v}|_F, -1)} = \frac{(q-1)Z_{F_{xy}}(q, \textbf{v}|_F)}{Z_{F}(q, \textbf{v}) - Z_{F_{xy}}(q, \textbf{v}|_F)}.
\end{equation}

If $F$ is a graph with $x,y \in V(F)$, then we write $v_F(q,v)$ to indicate the effective weight of the graph $F$ where every edge has weight $v$. The following lemma shows that replacing the graph $F$ with a single edge of weight $v_F$ only changes the multivariate Tutte polynomial by a prefactor depending on $F$.

\begin{lemma}\emph{\cite{Don.Jac.Nearly3Con}}\label{lem:effW}
$Z_G(q, \textbf{v}) = \frac{1}{q(q-1)}Z_{F+xy}(q, \textbf{v}|_F,-1) Z_{H + xy}(q, \textbf{v}|_H, v_F).$
\end{lemma}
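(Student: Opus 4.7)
The plan is to prove the identity by a direct computation, splitting each Tutte polynomial sum according to the connectivity between $x$ and $y$. For the weighted graph $(F, \textbf{v}|_F)$, I would decompose
\begin{equation*}
Z_F(q, \textbf{v}|_F) = Z_F^c + Z_F^d,
\end{equation*}
where $Z_F^c$ collects the monomials $q^{k(A_F)}\prod_{e \in A_F} v_e$ over those $A_F \subseteq E(F)$ for which $x$ and $y$ lie in the same component of $(V(F), A_F)$, and $Z_F^d$ collects the remaining terms. Define $Z_H^c, Z_H^d$ analogously. Almost everything in the proof is expressed in terms of these four quantities.

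The first step is to handle $Z_G$. Every $A \subseteq E(G)$ decomposes uniquely as $A = A_F \cup A_H$, and a small case analysis on how the components glue at $\{x,y\}$ shows that $k_G(A) = k_F(A_F) + k_H(A_H) - 1$ when $x,y$ lie in a common component in both $(V(F), A_F)$ and $(V(H), A_H)$, and $k_G(A) = k_F(A_F) + k_H(A_H) - 2$ in the remaining three cases. Summing over the four cases yields
\begin{equation*}
q^2 Z_G(q, \textbf{v}) = qZ_F^c Z_H^c + Z_F^c Z_H^d + Z_F^d Z_H^c + Z_F^d Z_H^d.
\end{equation*}

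The second step is to rewrite the building blocks appearing in~\eqref{eq:effW}. Identifying $x$ with $y$ preserves the component count when $x,y$ lie in a common component of $(V(F), A_F)$ and decreases it by one otherwise, which gives $qZ_{F_{xy}}(q, \textbf{v}|_F) = qZ_F^c + Z_F^d$. Expanding $Z_{F+xy}(q, \textbf{v}|_F, w)$ over the inclusion/exclusion of the new edge produces $Z_{F+xy}(q, \textbf{v}|_F, w) = (1+w)Z_F^c + (1+w/q)Z_F^d$; setting $w=-1$ leaves $\tfrac{q-1}{q}Z_F^d$. Feeding these into~\eqref{eq:effW} collapses the ratio to $v_F = qZ_F^c/Z_F^d$.

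With these ingredients, the final step is algebraic bookkeeping: I would substitute $Z_F^c = (v_F/q)Z_F^d$ together with the analogous expansion $Z_{H+xy}(q, \textbf{v}|_H, v_F) = (1+v_F)Z_H^c + (1+v_F/q)Z_H^d$ into the right-hand side, and verify that the product reduces to $q(q-1)$ times the expression obtained for $Z_G$. The whole argument is a computation; the only genuine content is the component-count case analysis in Step 1 and the simplification of~\eqref{eq:effW} in Step 2. The main subtlety to keep in mind is that~\eqref{eq:effW} should be interpreted as an identity in the polynomial (or rational function) ring in $q$ and the edge weights, so that the division by $Z_F^d$ used to isolate $v_F$ is legitimate and the resulting identity is well-defined as a statement about polynomials.
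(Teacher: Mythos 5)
The paper states this lemma only with a citation to~\cite{Don.Jac.Nearly3Con} and supplies no proof of its own, so there is no internal argument to compare your proposal against; I will simply assess it directly. Your computation is correct and self-contained. The component-count case analysis in Step~1 is right: gluing $(V(F),A_F)$ and $(V(H),A_H)$ at $\{x,y\}$ drops the total count by $1$ precisely when $x$ and $y$ are joined in both pieces, and by $2$ in the three remaining cases, giving $q^2 Z_G = qZ_F^cZ_H^c + Z_F^cZ_H^d + Z_F^dZ_H^c + Z_F^dZ_H^d$. The expansions $qZ_{F_{xy}} = qZ_F^c + Z_F^d$ and $Z_{F+xy}(q,\cdot,w) = (1+w)Z_F^c + (1+w/q)Z_F^d$ follow from the same style of case analysis, and plugging $w=-1$ into the definition~\eqref{eq:effW} yields $v_F = qZ_F^c/Z_F^d$ as you claim. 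Substituting into the right-hand side and comparing with $Z_G$ indeed gives equality; I verified both sides reduce to $\tfrac{1}{q^2}\bigl(qZ_F^cZ_H^c + Z_F^cZ_H^d + Z_F^dZ_H^c + Z_F^dZ_H^d\bigr)$. Your caveat about treating~\eqref{eq:effW} formally so that division by $Z_F^d$ is legitimate is well placed. One small point worth stating explicitly: the unique decomposition $A = A_F \cup A_H$ in Step~1 uses that $E(F)$ and $E(H)$ partition $E(G)$, which is implicit in the paper's $2$-sum setup ($V(F)\cap V(H)=\{x,y\}$ forces every edge of $G$ to lie in exactly one of $F$, $H$, once any parallel $xy$ edges are assigned to one side).
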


We briefly note the effective weights of two common graphs which we will frequently use. See~\cite{Sok.MultiTutte} for a more detailed derivation. We define a \emph{\dip} to be the loopless multigraph consisting of two vertices $x$ and $y$ connected with a number of parallel edges. Thus a single edge $xy$ is considered to be a dipole. If $F$ is a dipole with $s$ edges of weights $v_1, \dots, v_s \in \mathbb{R}$, then the effective weight $v_F$ of $F$ satisfies 

\begin{equation}\label{eq:effWdipole}
1 + v_F = \prod_{i=1}^s (1 + v_i).
\end{equation}

We also use $P_s$ to denote the path with $s$ edges whose end vertices are labelled $x$ and $y$. If the edges of $P_s$ have weights $v_1, \dots, v_s \in \mathbb{R}$, then the effective weight $v_{P_s}$ of $P_s$ satisfies

\begin{equation}\label{eq:effWpath}
1 + \frac{q}{v_{P_s}} = \prod_{i=1}^s \(1 + \frac{q}{v_i}\).
\end{equation}

We say that a connected loopless graph $F$ with two vertices labelled $x$ and $y$ respectively is a \emph{\ttg} if $x$ and $y$ are not adjacent in $F$. The vertices $x$ and $y$ are called \emph{terminals}. The following lemma shows that these graphs satisfy a technical condition which will be required later.

\begin{lemma}\label{lem:effWpositive}
Let $v \in \mathbb{R}$ be fixed. If $F$ is a \ttg, then $v_F(q,v)$ is not a constant function of $q$ and there exists $q_0 > 0$ such that $1 + v_F (q,v) > 0$ for all $q > q_0$.
\end{lemma}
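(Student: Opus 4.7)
The plan is to derive a transparent closed form for $v_F(q,v)$ by partitioning the defining sums in \eqref{eq:effW} according to whether $x$ and $y$ lie in the same component of the spanning subgraph $(V(F), A)$. Writing $x\sim_A y$ for this event, define
\[
Z_F^{\mathrm s}(q,v) \;=\; \sum_{\substack{A\subseteq E(F)\\ x\sim_A y}} q^{k(A)} v^{|A|}, \qquad Z_F^{\mathrm d}(q,v) \;=\; \sum_{\substack{A\subseteq E(F)\\ x\not\sim_A y}} q^{k(A)} v^{|A|}.
\]
The key observation is that identifying $x$ with $y$ leaves $k(A)$ fixed when $x\sim_A y$ and decreases it by exactly one otherwise; hence $Z_F = Z_F^{\mathrm s} + Z_F^{\mathrm d}$ and $Z_{F_{xy}} = Z_F^{\mathrm s} + \tfrac{1}{q} Z_F^{\mathrm d}$. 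Substituting these into \eqref{eq:effW} makes the factor $q-1$ cancel out of numerator and denominator, yielding the clean identity
\[
v_F(q,v) \;=\; \frac{q\,Z_F^{\mathrm s}(q,v)}{Z_F^{\mathrm d}(q,v)}.
\]

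With this in hand I would read off the $q$-degrees. The subset $A=\emptyset$ contributes the unique top-$q$-degree term $q^{|V(F)|}$ to $Z_F^{\mathrm d}$, so $Z_F^{\mathrm d}$ is monic in $q$ of degree $|V(F)|$ with leading coefficient independent of $v$. On the other hand, every $A$ with $x\sim_A y$ must contain an $xy$-path, and since $x$ and $y$ are non-adjacent in $F$ such a path has length at least $2$; the $xy$-component therefore has at least three vertices, forcing $k(A)\leq |V(F)|-2$ and hence $\deg_q(qZ_F^{\mathrm s}) \leq |V(F)|-1$.

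Both parts of the lemma now follow directly from this degree asymmetry. For positivity, the denominator strictly $q$-dominates the numerator, so $v_F(q,v) \to 0$ and $1+v_F(q,v) \to 1$ as $q\to\infty$; any $q_0$ large enough to lie past the finitely many real zeros of the monic polynomial $Z_F^{\mathrm d}(\cdot,v)$ and to keep $1+v_F(q,v)$ positive works. For non-constancy, suppose $v_F(q,v) \equiv c$; then $qZ_F^{\mathrm s}(q,v) = cZ_F^{\mathrm d}(q,v)$ as polynomials in $q$, and matching the coefficient of $q^{|V(F)|}$ forces $c=0$, so $Z_F^{\mathrm s}(q,v)\equiv 0$ in $q$. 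However the edge set of any shortest $xy$-path of length $d\geq 2$ in $F$ contributes the monomial $q^{|V(F)|-d}v^d$ to $Z_F^{\mathrm s}$, with coefficient the positive-integer number of such paths; this is a nonzero contribution as long as $v\neq 0$, contradicting $Z_F^{\mathrm s}\equiv 0$. (The degenerate case $v=0$ reduces to $Z_F(q,0)=q^{|V(F)|}$ and lies outside the regions relevant to the density arguments.) The only step that requires genuine care is the bookkeeping behind the decomposition $Z_F = Z_F^{\mathrm s} + Z_F^{\mathrm d}$ and its counterpart for $Z_{F_{xy}}$; after that the argument is a routine polynomial-degree comparison.
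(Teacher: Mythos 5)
Your decomposition of the partition function into $Z_F^{\mathrm s}$ and $Z_F^{\mathrm d}$ (according to whether $x$ and $y$ lie in the same component of $(V,A)$), together with the clean identity $v_F = qZ_F^{\mathrm s}/Z_F^{\mathrm d}$, is a genuine refinement of what the paper does. The paper works directly with the expression $1 + v_F = (qZ_{F_{xy}}-Z_{F_{xy}})/(Z_F - Z_{F_{xy}})$ and reads off the leading $q$-powers of $Z_{F_{xy}}$ and $Z_{F+xy}(q,v,-1)$, namely $q^{|V(F)|-1}$ and $q^{|V(F)|}$; your version makes the cancellation of the factor $q-1$ explicit and turns the positivity claim into a one-line degree comparison. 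For the positivity-for-large-$q$ half of the lemma, the two arguments are essentially the same degree bookkeeping, just presented through different lenses, and both are correct. You also correctly flag the degenerate case $v=0$, which the paper's statement and proof quietly overlook.

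However, the non-constancy half of your argument has a gap at the final step, and it is worth naming because it is exactly the point where this kind of coefficient argument is delicate. You argue that $Z_F^{\mathrm s}(q,v)\not\equiv 0$ because a shortest $xy$-path contributes the bivariate monomial $q^{|V(F)|-d}v^d$ with a positive integer coefficient. That shows $Z_F^{\mathrm s}$ is not the zero \emph{bivariate} polynomial. But the lemma fixes $v=v_0$ and asks that $Z_F^{\mathrm s}(q,v_0)$ not vanish identically as a polynomial in $q$ alone, and the coefficient of $q^{|V(F)|-d}$ in that univariate polynomial is $c_{|V(F)|-d}(v_0) = \sum_{A:\,x\sim_A y,\,k(A)=|V(F)|-d} v_0^{|A|}$, which collects contributions from all $A$ of the right component count, not only the shortest paths; these can cancel at specific negative $v_0$. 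For instance, if $F$ is the $4$-vertex path $x\text{--}a\text{--}b\text{--}y$ with the middle edge doubled (a legitimate \ttg since the paper allows multigraphs), then $Z_F^{\mathrm s}(q,v) = (2v^3+v^4)q$, which vanishes identically in $q$ at $v=-2$, and indeed $1+v_F(q,-2)\equiv 1$. So the passage from ``nonzero as a bivariate polynomial'' to ``nonzero in $q$ for this fixed $v$'' does not follow. To be fair, the paper's own proof has the same soft spot — it asserts that $Z_F(q,v)$ has a nonzero coefficient of $q^1$, which is a bivariate truth but can fail at particular real $v$ — so you have not introduced a new error, but you have reproduced one that a careful reader should want to see repaired (e.g.\ by restricting $F$ to be simple or by restricting the range of $v$ to what the later lemmas actually need).
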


\begin{proof}
By~\eqref{eq:effW}, we have
$$1 + v_F = \frac{(q-1)Z_{F_{xy}}(q,v)}{Z_{F+xy}(q,v,-1)} = \frac{qZ_{F_{xy}}(q,v) - Z_{F_{xy}}(q,v)}{Z_F(q,v) - Z_{F_{xy}}(q,v)}.$$
Since $xy \not\in E(F)$, the graph $F_{xy}$ is loopless. Hence, the terms with the highest powers of $q$ in $Z_{F_{xy}}(q,v)$ and $Z_{F+xy}(q,v,-1)$ are $q^{|V(F)|-1}$ and $q^{|V(F)|}$ respectively. Thus there exists such $q_0 > 0$.

By the same reason, if $1+v_F$ is a constant function of $q$ then it is 1, and $qZ_{F_{xy}}(q,v) = Z_F(q,v)$. However, $Z_F(q,v)$ has a nonzero coefficient of $q$ whereas the minimum degree of $q$ in $qZ_{F_{xy}}$ is 2. This is a contradiction, so $1+v_F$ is not a constant function of $q$.
\end{proof}

\section{Strategy}\label{sec:Strat}

In~\cite{Tho.RootsDense}, Thomassen showed that the zeros of the chromatic polynomial include a dense subset of the interval $(32/27, \infty)$. In this section we generalise his technique to the Tutte polynomial. At the heart of Thomassen's method lies the following lemma, which is proved implicitly in Proposition 2.3 of~\cite{Tho.RootsDense}.

\begin{lemma}\emph{\cite{Tho.RootsDense}}\label{lem:functions}
Let $I \subseteq \mathbb{R}$ be an interval of positive length, and let $a, b$ and $c$ be rational functions of $q$ such that $0 < b(q) < 1 < a(q)$ and $c(q)>0$ for all $q \in I$. If there is no $\alpha \in \mathbb{Q}$ such that $\log(a(q)) / \log(b(q))  = \alpha$ for all $q \in I$, then there exist natural numbers $s$ and $t$ such that $a(q_0)^s b(q_0)^t = c(q_0)$ for some $q_0 \in I$. Moreover, $s$ and $t$ can be chosen to have prescribed parity.
\end{lemma}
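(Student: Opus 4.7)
The plan is to interpret the equation $a(q_0)^s b(q_0)^t = c(q_0)$ in logarithmic form as $F_{s,t}(q_0) := s \log a(q_0) + t \log b(q_0) - \log c(q_0) = 0$, and to invoke the intermediate value theorem on $F_{s,t}$ viewed as a continuous function of $q \in I$. Introducing the auxiliary quantity $\sigma(q) := -\log b(q)/\log a(q)$, which is strictly positive on $I$, one factors
$$F_{s,t}(q) = t\bigl(s/t - \sigma(q)\bigr)\log a(q) - \log c(q).$$
Thus the sign of $F_{s,t}(q)$ is controlled by whether the rational $s/t$ lies above or below $\sigma(q)$, with $-\log c(q)$ contributing only a bounded perturbation that becomes negligible once $t$ is large.

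The crucial preliminary step is to show that $\sigma$ is non-constant on $I$. Suppose $\sigma \equiv K > 0$ on $I$; then $K\log a + \log b \equiv 0$ on $I$, which rearranges to the identity $a^K b \equiv 1$. Provided $a$ and $b$ are not both constants, comparing orders of zeros and poles of the two sides forces $K$ to be rational. But $K$ rational gives $\log a / \log b = -1/K$ as a rational constant, contradicting the hypothesis. The remaining degenerate case where $a$ and $b$ are both constant is handled by a separate, direct argument: the hypothesis forces $\log a/\log b$ to be irrational, so $\{a^s b^t : s, t \in \mathbb{N}\}$ is dense in $(0, \infty)$ by a Weyl-type equidistribution, and it therefore meets the image $c(I)$.

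With $\sigma$ non-constant on $I$, I would pick $q_1, q_2 \in I$ with $\sigma(q_1) < \sigma(q_2)$ and choose a rational $p = u/v$ strictly between them. Setting $(s,t) = (nu, nv)$ with $n$ sufficiently large makes the linear-in-$n$ piece of $F_{s,t}(q_i)$ dominate the bounded term $-\log c(q_i)$, giving $F_{s,t}(q_1) > 0 > F_{s,t}(q_2)$; the intermediate value theorem then produces the required $q_0 \in (q_1, q_2)$ with $F_{s,t}(q_0) = 0$. For the parity conclusion I would use the fact that reduced rationals with any of the parity patterns $(\text{odd},\text{odd})$, $(\text{odd},\text{even})$ or $(\text{even},\text{odd})$ are each dense in $\mathbb{R}$; picking $p = u/v$ with the appropriate pattern and then scaling by an odd $n$ preserves the parities, while scaling by an even $n$ collapses to the $(\text{even},\text{even})$ case, so all four possible parity combinations for $(s,t)$ are realisable.

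The main obstacle in carrying out this plan is the non-constancy step for $\sigma$: the hypothesis excludes only that $\log a/\log b$ be a \emph{rational} constant, so one must still rule out the possibility that it is an \emph{irrational} constant. This is where the algebraic rigidity of rational functions enters, through the observation that an identity $a^K b \equiv 1$ between non-constant rational functions cannot hold with $K$ irrational (since the orders of zeros and poles of a rational function are integers). Once this step is in place, the rest of the argument is a routine intermediate value theorem sign-change combined with the density bookkeeping that controls the parities.
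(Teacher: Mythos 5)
The paper itself contains no proof of this lemma --- it is quoted from Thomassen~\cite{Tho.RootsDense}, where it is implicit in Proposition~2.3 --- and your argument is essentially the standard one behind it: rewrite the target equation as $s\log a(q)+t\log b(q)=\log c(q)$, use non-constancy of $\sigma(q)=-\log b(q)/\log a(q)$ to find $q_1,q_2\in I$ with $\sigma(q_1)<\sigma(q_2)$, take $s/t$ equal to a rational $u/v$ strictly between these values, let $n\to\infty$ so that the sign of $F_{s,t}$ differs at $q_1$ and $q_2$, and finish with the intermediate value theorem; the parity bookkeeping via the parities of $u,v$ and an odd (or even) scaling factor $n$ is also fine. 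One small point of rigour in the non-constancy step: ``comparing orders of zeros and poles'' of $a^Kb\equiv 1$ is informal when $K$ is irrational, since $a^K$ is not a rational function. The clean way is to differentiate $K\log a+\log b\equiv 0$ on $I$ to get $b'/b=-K\,a'/a$ as an identity of rational functions, and compare residues at a finite zero or pole of $a$ (which exists because $a$ is non-constant); these residues are integers, forcing $K\in\mathbb{Q}$ and hence the desired contradiction.

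The genuine gap is your treatment of the degenerate case in which $a$ and $b$ are both constant. Density of $\{a^sb^t:s,t\in\mathbb{N}\}$ in $(0,\infty)$ only lets you ``meet the image $c(I)$'' when $c(I)$ has nonempty interior, i.e.\ when $c$ is a non-constant rational function (and even then you need a final intermediate value argument in $q$ to achieve exact equality). If $c$ is also constant, density gives only approximate solutions, and in fact the statement as literally written is false in that corner: take $a\equiv 2$, $b\equiv 2^{-\sqrt{2}}$, $c\equiv 3$; then $\log a/\log b=-1/\sqrt{2}$ is irrational, yet $a^sb^t=3$ would give $(2^{\sqrt{2}})^{t}=2^s/3$, making $2^{\sqrt{2}}$ algebraic and contradicting the Gelfond--Schneider theorem. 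So the lemma implicitly requires that $a$ and $b$ are not both constant --- which is exactly what the authors verify, via Lemma~\ref{lem:effWpositive}, before invoking the lemma in the proof of Lemma~\ref{lem:rootFind}. Your proof is correct once you either add that proviso or restrict the degenerate case to non-constant $c$; as written, the claim that the dense set meets $c(I)$ does not follow.
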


Let $F$ be a \ttg or \dip. For real numbers $q$ and $v$, we say that $F$ has one of four types at $(q,v)$ defined by the following conditions.
\begin{list}{}{}
\item[\textbf{Type $A^+$:}] $1 + v_F (q,v) > 1$, 
\item[\textbf{Type $A^-$:}] $1 + v_F (q,v) < -1$,
\item[\textbf{Type $B^+$:}] $0 < 1 + v_F  (q,v) < 1$,
\item[\textbf{Type $B^-$:}] $-1 < 1 + v_F (q,v) < 0$.
\end{list}

Let $A$ and $B$ be \ttgs or \dips. Note that if $A$ is a graph of type $A^+$ or $A^-$ at $(q,v)$, then the rational function $1+ v_A(q,v)$ or $-1- v_A(q,v)$ can play the role of $a(q)$ in Lemma~\ref{lem:functions}. The corresponding property holds for a graph of type $B^+$ or $B^-$. 

\begin{definition}\label{def:complementaryPair}
Let $A$ and $B$ be \ttgs or \dips. We say that the pair $(A,B)$ is \emph{complementary} at $(q,v)$ if at most one of $A$ and $B$ is a \dip, and 
\begin{enumerate}[label=-]
\item $A$ has type $A^+$ and $B$ has type $B^-$ at $(q,v)$, or
\item $A$ has type $A^-$ and $B$ has type $B^+$ at $(q,v)$.
\end{enumerate}
\end{definition}

The definition of complementary pairs is motivated by the following key lemma.

\begin{lemma}\label{lem:rootFind}
Let $q_0, v \in \mathbb{R}$ be fixed such that  $q_0 \neq 1$, and let $A$ and $B$ be \ttgs or \dips. If $(A,B)$ is complementary at $(q_0,v)$, then for all $\varepsilon > 0$, there is a graph $G$ such that $Z_G(q_1,v) = 0$ for some $q_1 \in (q_0-\varepsilon,q_0+\varepsilon)$. Furthermore, if both $A+xy$ and $B+xy$ are planar, then we can choose $G$ to be planar.
\end{lemma}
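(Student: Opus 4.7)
The plan is to find positive integers $s, t$ and a real number $q_1$ close to $q_0$ satisfying a specific multiplicative identity in $1+v_A(q_1,v)$ and $1+v_B(q_1,v)$, and then to build $G$ by identifying $s$ copies of $A$ and $t$ copies of $B$ along the common terminal pair $\{x,y\}$. The driving observation is that, by iterating Lemma~\ref{lem:effW} across the $2$-vertex cut $\{x,y\}$, the effective weight of such a parallel composition satisfies
$$1 + v_G(q,v) = (1+v_A(q,v))^{s}(1+v_B(q,v))^{t},$$
and a short calculation from~\eqref{eq:effW} gives $Z_G(q,v) = Z_{G_{xy}}(q,v)\cdot(q+v_G)/(1+v_G)$. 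Hence $Z_G(q_1,v)=0$ whenever $1+v_G(q_1,v) = 1-q_1$, provided $q_1 \neq 1$ and $Z_{G_{xy}}(q_1,v)\neq 0$ (both can be arranged by shrinking neighbourhoods).

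To set up Lemma~\ref{lem:functions} I would treat the two cases of Definition~\ref{def:complementaryPair} uniformly by letting $a(q) = |1+v_A(q,v)|$ and $b(q) = |1+v_B(q,v)|$. By the complementary type hypothesis, $a(q_0) > 1$ and $0 < b(q_0) < 1$, and the same inequalities persist on an open interval $I \ni q_0$ by continuity. Since $q_0 \neq 1$, the function $c(q) = |1 - q|$ is positive on $I$ after shrinking $I$ if needed. Lemma~\ref{lem:functions} then produces $s, t$ and $q_1 \in I$ with $a(q_1)^{s} b(q_1)^{t} = c(q_1)$. The identity I actually want, $(1+v_A(q_1,v))^{s}(1+v_B(q_1,v))^{t} = 1-q_1$, differs from this only by an overall sign $(-1)^s$ or $(-1)^t$ (depending on which of $A, B$ supplies a negative base), and the prescribed-parity clause of Lemma~\ref{lem:functions} lets me match that parity to the sign of $1 - q_1$, i.e.\ to whether $q_0 < 1$ or $q_0 > 1$.

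The step I expect to be the main obstacle is verifying the non-constancy hypothesis of Lemma~\ref{lem:functions}: that $\log a(q)/\log b(q)$ is not a rational constant on $I$. Since $(A,B)$ is complementary, at most one of $A, B$ is a \dip, so at least one is a \ttg and hence, by Lemma~\ref{lem:effWpositive}, has $1+v_F(q,v)$ non-constant in $q$. When exactly one of the two is a dipole, one of $\log a, \log b$ is constant and the other is not, so the ratio is non-constant. In the case where both are \ttgs, a putative identity $a(q)^{p} \equiv b(q)^{r}$ as rational functions would force a very rigid relation between the two effective weights; I would attempt to preclude this by examining the $q \to \infty$ expansion of $1+v_F$ (which has an explicit combinatorial first correction beyond the leading value $1$), and, if such a relation happens to hold accidentally, break it by replacing $A$ or $B$ with its parallel composition with itself, which preserves type but perturbs the effective weight enough to destroy the identity.

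With $(s,t,q_1)$ in hand, $G$ is constructed as described and satisfies $Z_G(q_1,v)=0$. For the planarity clause, if $A+xy$ and $B+xy$ are planar then $A$ and $B$ each admit a planar embedding with $x$ and $y$ on a common face; inductively, after each parallel composition the resulting graph still has a face whose boundary contains both terminals, so the next copy can be glued into that face without crossings, and $G$ is planar.
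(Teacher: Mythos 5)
Your construction (parallel composition of $s$ copies of $A$ and $t$ copies of $B$), your reduction to the identity $(1+v_A)^s(1+v_B)^t = 1-q_1$, and your use of the prescribed-parity clause of Lemma~\ref{lem:functions} to fix the sign all coincide with the paper's proof, and are carried out correctly. (The absolute-value framing of $a$ and $b$ is a little loose since Lemma~\ref{lem:functions} wants rational functions, but on the interval $I$ these agree with $\pm(1+v_A)$ and $\pm(1+v_B)$, so that is only a notational quibble.)

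The genuine gap is in verifying the non-constancy hypothesis of Lemma~\ref{lem:functions} when both $A$ and $B$ are two-terminal graphs. The paper disposes of this cleanly via Lemma~\ref{lem:effWpositive}: since $1+v_F(q,v)>0$ for all large $q$ whenever $F$ is a two-terminal graph, the rational function playing the role of $a$ (namely $-(1+v_A)$ in the $A^-/B^+$ case) is eventually negative while $b=1+v_B$ is eventually positive, so $a=b^\alpha$ cannot hold identically. You instead propose to (i) examine the $q\to\infty$ expansion and hope it rules out a rational relation, and (ii) if an ``accidental'' relation occurs, destroy it by replacing $A$ or $B$ with its parallel self-composition. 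Neither part is complete, and (ii) is actually wrong on two counts. First, parallel doubling does not preserve type: $1+v_{F^{(2)}}=(1+v_F)^2$ sends $A^-$ to $A^+$ and $B^-$ to $B^+$, so in each branch of Definition~\ref{def:complementaryPair} doubling the ``negative-type'' graph destroys complementarity, and one is forced to double the other graph. Second, even when complementarity survives, a relation $a^p b^r = 1$ (which is the shape of the relation since $\alpha<0$ here) yields $a^{2p}b^{2r}=1$ after squaring, i.e.\ $(a^2)^p b^{2r}=1$, so replacing $a$ by $a^2$ does not break the relation --- it merely changes the rational exponent. So the fallback cannot work as described, and you need the sign-at-infinity argument from Lemma~\ref{lem:effWpositive} to close this step.
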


\begin{proof}
Suppose that $A$ has type $A^-$ and $B$ has type $B^+$ at $(q_0,v)$. The other case is analogous. By continuity of $v_A(q,v)$ and $v_B(q,v)$, there exists an interval $I \subseteq \mathbb{R}$ of small positive length such that $q_0 \in I \subseteq (q_0-\varepsilon, q_0 + \varepsilon)$ and the graphs $A$ and $B$ have types $A^-$ and $B^+$ respectively for all $q \in I$. For positive integers $s$ and $t$ (which will be determined later), let $G$ denote the graph obtained from $s$ copies of $A$ and $t$ copies of $B$ by identifying all vertices labelled $x$ into a single vertex and all vertices labelled $y$ into another. In other words, we place all copies of $A$ and $B$ in parallel. Notice that by construction the graph $G$ is planar if $A+xy$ and $B+xy$ are both planar. Using Lemma~\ref{lem:effW} and~\eqref{eq:effWdipole}, one can see that the Tutte polynomial of $G$ is
$$Z_{G}(q,v) = q \left(\frac{Z_{A+xy}(q,v,-1)}{q(q-1)}\right)^s \left(\frac{Z_{B+xy}(q,v,-1)}{q(q-1)}\right)^t f(q,v),$$
where $f(q,v) = q-1 + (1+v_A(q,v))^s(1+v_B(q,v))^t.$ Define $a(q) = - 1 - v_A(q,v)$ and $b(q) = 1 + v_B(q,v)$. Moreover, define $c(q) = q - 1$ or $c(q) = 1 - q$ such that $c(q)>0$ for all $q \in I$. In doing this it may be necessary to replace $I$ with an appropriate subinterval. If $a$, $b$ and $c$ satisfy the conditions in Lemma~\ref{lem:functions}, then there are positive integers $s$ and $t$, of any prescribed parity, such that $a(q_1)^s b(q_1)^t = c(q_1)$ for some $q_1 \in I$. Since $f(q,v)$ is $c(q) + (-a(q))^s b(q)^t$ or $- c(q) + (-a(q))^s b(q)^t$, we may choose the parity of $s$ and $t$ such that this factor becomes zero for some $q_1 \in I$. 

It remains to check that $a$, $b$ and $c$ satisfy the conditions of Lemma~\ref{lem:functions}. Indeed, by assumption, we have that $0 < b(q) < 1 < a(q)$ and $c(q) > 0$ for all $q \in I$. Suppose for a contradiction that there is $\alpha \in \mathbb{Q}$ such that $\log(a(q)) / \log(b(q)) = \alpha$ for all $q \in I$. Equivalently $a(q) = b(q)^\alpha$ for $q \in I$. Since $a$ and $b$ are rational functions and $\alpha \in \mathbb{Q}$, it follows that this equality is satisfied for all $q \in \mathbb{R}$ except for any singularities. Also, since $a(q) > 0$ for $q \in I$, we take the principal branch of any fractional power. Since $(A,B)$ is complementary, at most one of $A$ and $B$ is a \dip, and thus by Lemma~\ref{lem:effWpositive}, at most one of $a$ and $b$ is a constant function. If precisely one of $a$ and $b$ is a constant function, then we immediately deduce a contradiction. Thus, we may assume that both of $A$ and $B$ are \ttgs. By Lemma~\ref{lem:effWpositive} we see that $a(q) < 0$ and $b(q) > 0$ for large enough $q$. This contradicts the assertion that $a(q) = b(q)^ \alpha$ for all $q \in \mathbb{R}$.
\end{proof}

\begin{corollary}\label{cor:strategy}
If $R$ is an open subset of the $(q,v)$ plane such that for every $(q, v) \in R$ there is a complementary pair of graphs $(A,B)$, then the zeros of the Tutte polynomials of graphs are dense in $R$.
\end{corollary}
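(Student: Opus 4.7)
The plan is to reduce directly to Lemma~\ref{lem:rootFind}, which already produces a real zero of some $Z_G(\,\cdot\,, v)$ arbitrarily close to a prescribed $q_0$ whenever a complementary pair exists at $(q_0, v)$. Corollary~\ref{cor:strategy} simply packages this pointwise statement as a density statement on a two-dimensional open set.

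Concretely, I fix an arbitrary $(q^*, v^*) \in R$ and $\varepsilon > 0$, and exhibit a graph $G$ together with a zero of $Z_G$ within distance $\varepsilon$ of $(q^*, v^*)$. I will only perturb in the $q$-direction, keeping $v = v^*$ throughout. Since $R$ is open, I may choose $(q_0, v^*) \in R$ with $q_0 \neq 1$ and $|q_0 - q^*| < \varepsilon/2$: this is possible because the excluded set $\{q = 1\}$ is one-dimensional while $R$ contains an open neighbourhood of $(q^*, v^*)$. By the hypothesis of the corollary, there is a complementary pair $(A, B)$ at $(q_0, v^*)$. Applying Lemma~\ref{lem:rootFind} to this pair at $(q_0, v^*)$ with tolerance $\varepsilon/2$ yields a graph $G$ and a real number $q_1$ with $|q_1 - q_0| < \varepsilon/2$ and $Z_G(q_1, v^*) = 0$. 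Then $(q_1, v^*)$ is a zero of $Z_G$ at distance at most $|q_1 - q_0| + |q_0 - q^*| < \varepsilon$ from $(q^*, v^*)$, and since $(q^*, v^*)$ and $\varepsilon$ were arbitrary, the zeros of Tutte polynomials of graphs are dense in $R$.

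The only substantive issue is the restriction $q_0 \neq 1$ in Lemma~\ref{lem:rootFind}, and as indicated above it is removed costlessly by using the openness of $R$ to perturb off the line $q = 1$. Hence there is no real obstacle; the genuine work has already been carried out in Lemmas~\ref{lem:functions}--\ref{lem:rootFind}, and Corollary~\ref{cor:strategy} is essentially a rephrasing.
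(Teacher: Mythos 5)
Your proof is correct and follows the same route the paper intends: the corollary is stated without proof because it is an immediate consequence of Lemma~\ref{lem:rootFind}, and you have simply spelled out the routine reduction, including the observation that openness of $R$ lets one perturb $q_0$ off the line $q=1$ at no cost.
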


To obtain density in some regions we will use planar duality. Let $G$ be a plane graph and let $G^*$ be its planar dual. The following relation is easily derived from~\eqref{eq:MVT} and Eulers formula, see~\cite{Sok.MultiTutte}.
\begin{equation}\label{eq:Duality}
Z_{G^*}(q,v) = q^{1-n}v^m Z_{G}(q,\tfrac{q}{v}).
\end{equation}

Thus we have a second corollary of Lemma~\ref{lem:rootFind}. Let us call a complementary pair $(A,B)$ \emph{planar} if both $A+xy$ and $B+xy$ are planar.

\begin{corollary}\label{cor:duality}
Let $R$ be an open subset of the $(q,v)$ plane. If for every $(q,v) \in \R$ there is a planar complementary pair of graphs, then the zeros of the Tutte polynomials of graphs are dense in $R^*$ where
\begin{equation}\label{eq:dualRegion}
R^* = \{(q,v) : (q,q/v) \in R \}.
\end{equation}
\end{corollary}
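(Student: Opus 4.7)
The plan is to chain Lemma~\ref{lem:rootFind} with the duality identity~\eqref{eq:Duality}. Let $(q^*, v^*) \in R^*$ and $\varepsilon > 0$; I first treat the generic case $q^* \neq 0$. By the definition of $R^*$, the point $(q, v) := (q^*, q^*/v^*)$ lies in $R$ and satisfies $v \neq 0$, so by hypothesis there is a planar complementary pair at $(q, v)$. For any prescribed $\varepsilon' > 0$, Lemma~\ref{lem:rootFind} produces a planar graph $G$ with $Z_G(q_1, v) = 0$ for some $q_1$ within $\varepsilon'$ of $q$. Since $G$ is planar, so is its dual $G^*$, and~\eqref{eq:Duality} gives $Z_{G^*}(q_1, q_1/v) = q_1^{1-n} (q_1/v)^m Z_G(q_1, v) = 0$, where $n = |V(G)|$ and $m = |E(G)|$; the prefactor is finite because $q_1, v \neq 0$.

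Next I would observe that the zero $(q_1, q_1/v)$ of $Z_{G^*}$ is the image under the map $\phi(q, v) := (q, q/v)$ of the known zero $(q_1, v)$ of $Z_G$. Since $v \neq 0$, $\phi$ is continuous at $(q, v)$, so choosing $\varepsilon'$ sufficiently small forces $(q_1, q_1/v)$ to lie within $\varepsilon$ of $(q^*, v^*) = \phi(q, v)$. This establishes density of the zeros of Tutte polynomials at every $(q^*, v^*) \in R^*$ with $q^* \neq 0$.

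For the remaining case $q^* = 0$ (where $v = 0$ would make~\eqref{eq:Duality} inapplicable via its prefactor), I would invoke openness of $R^*$: since $R$ is open and $\phi$ is a continuous involution on $\{q \neq 0, v \neq 0\}$, the set $R^* \setminus \{q = 0\}$ is open in the plane, and any $(0, v^*) \in R^*$ is approachable by nearby points of $R^*$ with nonzero first coordinate, to which the preceding argument applies.

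I do not anticipate any serious obstacle. Once Lemma~\ref{lem:rootFind} and the duality identity are in hand, the corollary reduces to a continuity check on the birational involution $\phi$; the only minor subtleties are the nonvanishing of $v$ and $q_1$ needed for the duality prefactor, both of which are easily arranged by shrinking $\varepsilon'$ and, if necessary, perturbing away from the axis $q = 0$.
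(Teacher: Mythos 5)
Your proposal is correct and matches the approach the paper implicitly intends: the corollary is stated without proof as an immediate consequence of Lemma~\ref{lem:rootFind} and the planar duality identity~\eqref{eq:Duality}, and your argument spells this out exactly --- apply the lemma at $\phi(q^*,v^*) \in R$ to get a zero of a planar $Z_G$ nearby, then transfer it to $Z_{G^*}$ via~\eqref{eq:Duality} and conclude by continuity of $\phi$. Your treatment of the prefactor ($q_1, v \neq 0$) and the degenerate line $q^* = 0$ is careful and correct; the only remaining pinprick, also unaddressed in the paper's Corollary~\ref{cor:strategy}, is that Lemma~\ref{lem:rootFind} requires $q_0 \neq 1$, which is absorbed by the same density/openness observation you already use for $q^*=0$.
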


\section{Complementary Pairs}

In this section, we find complementary pairs of graphs for points $(q,v)$ in several regions of the $(q,v)$ plane. Combining this with Corollaries~\ref{cor:strategy} and~\ref{cor:duality}, we deduce Theorem~\ref{thm:main}. In what follows it will be useful to partition the $(q,v)$ plane into a number of regions, see Figures~\ref{fig:outsideBox} and~\ref{fig:insideBox}. Note that taken together, the closure of the regions below is equal to the union of the regions in Theorem~\ref{thm:main}. Thus, if the zeros of the Tutte polynomials of graphs are dense in these regions, then Theorem~\ref{thm:main} follows.

\begin{itemize}[label={}]
%\begin{itemize}[label={}, itemsep=10pt]
\item Region I: $q < 0$ and $v < -2$.
\item Region II: $0 < q < 1$ and $v < -2$.
\item Region III: $1 < q < 2$ and $v < -2$.
\item Region IV: $2 < q < 4$, $q \neq 3$ and $v < -q$.
\item Region V: $q>2$ and $-q < v < -2$.
\item Region VI: $2 < q < 4$ and $-2 < v < -q/2$.
\item Region VII: $q > 2$ and $- 1 < v < 0$.
\item Region VIII: $0 < q < 32/27$ and $-2 < v < \vmin$.
\item Region IX: $32/27 < q < 2$ and $-2 < v < -1$.
\end{itemize}

We also define the following `dual' regions in the sense of~\eqref{eq:dualRegion}. It is easy to check that the dual of each region below is contained in the corresponding region above.

\begin{itemize}[label={}]
\item Region I$^*$: $q < 0$ and $0 < v < -q/2$.
\item Region II$^*$: $0 < q < 1$ and $-q/2 < v < 0$.
\item Region III$^*$: $1 < q < 2$ and $-q/2 < v < 0$.
\item Region V$^*$: $-2 < v < -1$ and $q > -2v$.
%\item Region VI$^*$: $q > 2$ and $v < -q$.
\item Region VIII$^*$: $0 < q < 32/27$ and $\vplus < v < -q/2$.
\item Region IX$^*$: $32/27 < q < 2$ and $-1 < v < -q/2$.
\end{itemize}

\begin{figure}
\centering
\includegraphics[scale=1]{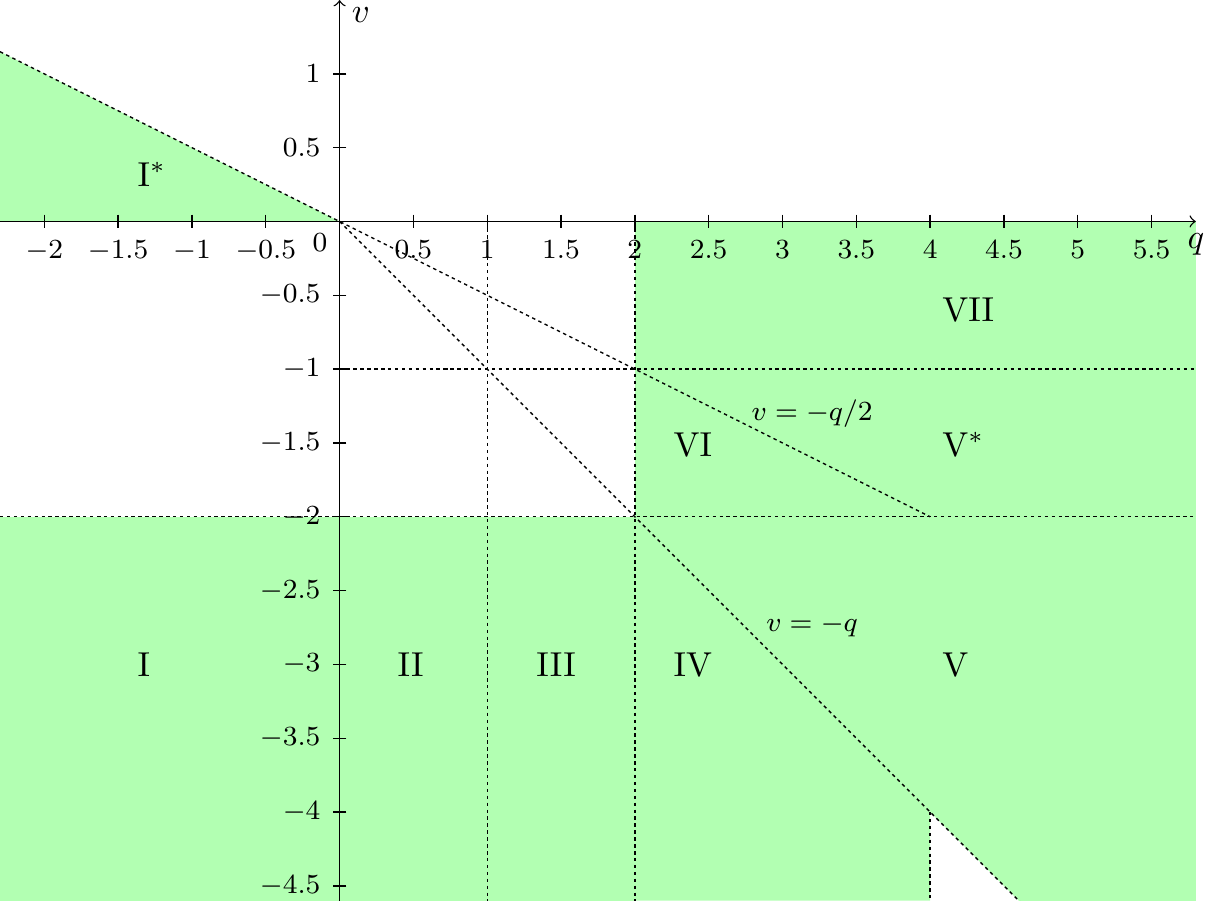}
\caption{Regions I - VII, I$^*$ and V$^*$.}\label{fig:outsideBox}
\end{figure}

\begin{figure}
\centering
\includegraphics[scale=1]{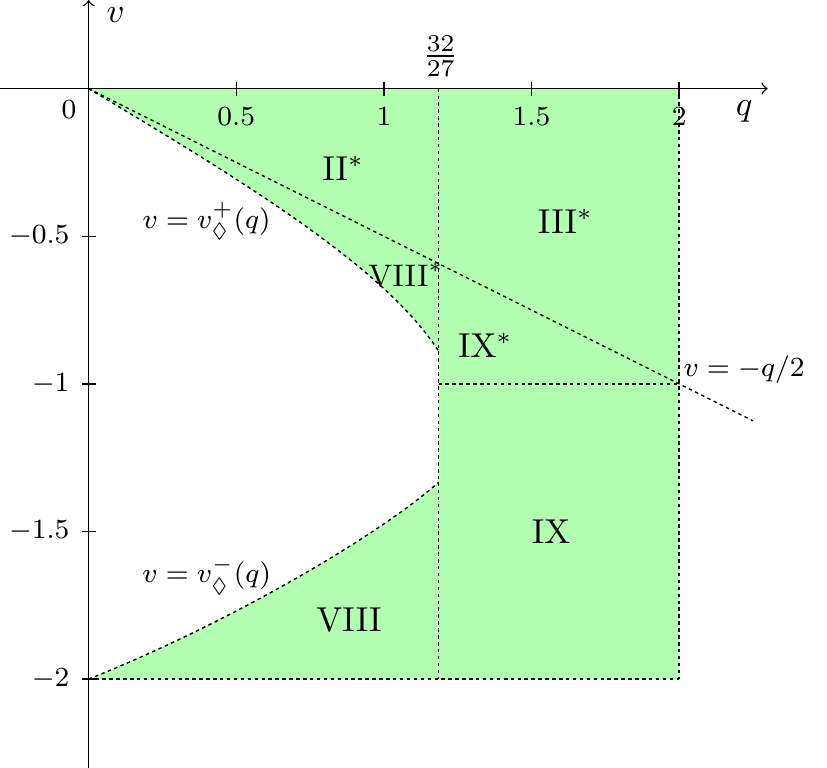}
\caption{Regions VIII, IX, II$^*$, III$^*$, VIII$^*$ and IX$^*$}\label{fig:insideBox}
\end{figure}

In the following lemma, we show that the path of an appropriate length $s$ gives graphs of varying types depending on the point $(q,v)$. The condition $s > 1$ guarantees that the resulting graphs are \ttgs. We note that parts~\ref{paths:(SM1)} and~\ref{paths:(SM2)} are equivalent to Lemmas 21 and 22 in~\cite{Gol.Jer.ComplexityTutte} respectively. 

\begin{lemma}\label{lem:paths}
Let $q$ and $v$ be real numbers, and let $P_s$ denote the path of length $s$ where every edge has weight $v$.
\begin{enumerate}[label=(\roman*)]
\item\label{paths:(5)} If $v < 0$ and $q < 0$, then there exits $s > 1$ such that $P_s$ has type $B^+$ at $(q,v)$.
\item\label{paths:(SM1)} If $v < -2$ and $0 < q < 1$, then there exits $s > 1$ such that $P_s$ has type $B^+$ at $(q,v)$.
\item\label{paths:(SM2)} If $v < -2$ and $1 < q < 2$, then there exits $s > 1$ such that $P_s$ has type $B^-$ at $(q,v)$.
\item\label{paths:(1)} If $v < 0$ and $q > -2v$, then there exist $s > 1$ such that $P_s$ has type $A^+$ at $(q,v)$.
\item\label{paths:(VI)} If $v < 0$ and $2 < q < - 2v$, then there exits $s > 1$ such that $P_s$ has type $A^-$ at $(q,v)$.
\end{enumerate}
\end{lemma}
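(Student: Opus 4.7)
The plan is to derive an explicit formula for $v_{P_s}(q,v)$ from the series-reduction identity~\eqref{eq:effWpath} and then study its asymptotic behaviour as $s\to\infty$, choosing the parity of $s$ when needed. Substituting equal edge weights into~\eqref{eq:effWpath} and writing $u = 1 + q/v$ yields
$$1 + v_{P_s}(q,v) \;=\; 1 + \frac{q}{u^s - 1} \;=\; \frac{u^s - 1 + q}{u^s - 1}.$$
The type of $P_s$ at $(q,v)$ is determined by the position of this single quantity relative to $-1$, $0$ and $1$, so the whole lemma reduces to locating $u$ and controlling $u^s$.

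Two regimes drive the analysis. When $|u|>1$, $u^s$ blows up and $1+v_{P_s}\to 1$, with the direction of approach controlled by the sign of $v_{P_s} = q/(u^s-1)$. When $|u|<1$ and $s$ is chosen so that $u^s\to 0^+$ (any $s$ if $u\geq 0$, even $s$ if $u<0$), then $1+v_{P_s}\to 1-q$, which lies in $(0,1)$ for $0<q<1$, in $(-1,0)$ for $1<q<2$, and below $-1$ for $q>2$. Each of the five required conclusions is an open condition on $1+v_{P_s}$, so it is witnessed by every sufficiently large $s$ of the chosen parity; such $s$ in particular exceeds $1$, making $P_s$ a \ttg.

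I would then dispatch the five parts by locating $u$. Part (i) has $q/v>0$, so $u>1$, and $v_{P_s}$ is a small negative number, giving type $B^+$. Parts (ii) and (iii) have $q/v\in(-1,0)$ since $v<-2$, so $u\in(0,1)$, and the limit $1-q$ lies in the prescribed interval, yielding types $B^+$ and $B^-$ respectively. Part (iv) has $q/v<-2$, so $u<-1$, and taking $s$ even gives $u^s>1$ and $v_{P_s}>0$, i.e.~type $A^+$. Part (v) has $q/v\in(-2,0)$, so $u\in(-1,1)$, and taking $s$ even sends $u^s\to 0^+$ and $1+v_{P_s}\to 1-q<-1$, giving type $A^-$.

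The main obstacle is the sign/parity bookkeeping in part (v), where $u$ may be positive or negative inside the region; restricting to even $s$ uniformises the sign of $u^s$, and the single degenerate value $u=0$ (i.e.~$v=-q$) has to be checked by direct substitution into the displayed formula, which still gives $1+v_{P_s}=1-q<-1$ for every $s\geq 1$. Beyond this minor case analysis, everything is routine manipulation of the single identity above.
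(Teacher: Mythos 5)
Your proposal is correct and follows essentially the same approach as the paper: both start from the explicit series-reduction formula $v_{P_s} = q/\bigl((1+q/v)^s - 1\bigr)$, locate $u = 1+q/v$ relative to $\pm 1$, and send $s\to\infty$ with appropriate parity. The only notable difference is that for part (v) the paper observes directly that $s=2$ already gives $v_{P_2}\leq -q < -2$ (since $u^2-1 \in [-1,0)$), avoiding the asymptotic limit and the separate check at $u=0$; otherwise the two proofs are the same argument in slightly different clothing.
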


\begin{proof}
By~\eqref{eq:effWpath}, the effective weight of $P_s$ is given by
\begin{equation}\label{eq:path}
v_{P_{s}} = \frac{q}{(1+\frac{q}{v})^s -1}.
\end{equation}

\begin{enumerate}[label=(\roman*)]
\item Since $q/v > 0$, the denominator of~\eqref{eq:path} tends to infinity with $s$. Because $q < 0$, we deduce that there exists $s > 1$ such that $-1 < v_{P_s} < 0$. Thus, $P_s$ has type $B^+$ as claimed.
\item The conditions of the lemma imply that $0 < 1 + \frac{q}{v} < 1$. Thus, for all $\varepsilon > 0$, there is $s > 1$ such that $-q-\varepsilon < v_{P_s} < -q$. Since $0 < q < 1$, there exists an $s > 1$ such that $P_s$ has type $B^+$. 
\item By the same argument as in~\ref{paths:(SM1)}, for every $\varepsilon > 0$, there exists $s > 1$ such that $-q-\varepsilon < v_{P_s} < -q$. Since $1 < q < 2$, there exists an $s > 1$ such that $P_s$ has type $B^-$. 
%\item The conditions imply that $q/v < -2$. Thus we can make the denominator of~\eqref{eq:path} arbitrarily large and of either sign. It follows that there exist $s, s' \in \mathbb{N}$ such that $-1 < v_{P_s} < 0 < v_{P_s'} < 1$ at $q$. The result follows.
\item The conditions imply $q/v < -2$. Thus for any even $s$ we have $v_{P_s} > 0$. It follows that there is $s>1$ such that $P_s$ has type $A^+$.
\item Since $2 < q < -2v$, we have $-1 < 1+ \frac{q}{v} < 1$. Thus, for $s=2$, we have $v_{P_s} < -q$. In particular, $v_{P_s} < -2$. Thus, there exists an $s>1$ such that $P_s$ has type $A^-$.
\end{enumerate}
\vspace{-10pt}
\end{proof}

We will also require some less simple \ttgs. Many of these we take from \cite{Gol.Jer.ComplexityTutte} and \cite{Jac.Sok.ZeroFreeMultiTutte} where a similar technique is used. The following lemma is an intermediate step in the proof of Lemma 11 from~\cite{Gol.Jer.ComplexityTutte}.

\begin{lemma}\label{lem:GJ(7)}
If $q > 2$ and $-q < v < -2$, then there is a \ttg of type $B^+$ at $(q,v)$. 
\end{lemma}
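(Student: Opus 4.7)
The strategy is to find, for each $(q,v)$ in Region V, a series--parallel \ttg whose effective weight satisfies $0 < 1 + v_F < 1$. The path $P_s$ from Lemma~\ref{lem:paths} is not enough here: when $v \leq -q/2$ (which contains all of Region V when $2 < q \leq 4$) we have $|1+q/v| \leq 1$, forcing $v_{P_s} \to -q$ (type $A^-$), so I enrich the path by thickening.

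For each integer $k \geq 2$, let $F_k$ denote the graph on vertices $u_0, u_1, \dots, u_k$ (with $u_0 = x$ and $u_k = y$) where each of the first $k-1$ edges $u_{i-1}u_i$ is a dipole of two parallel edges of weight $v$, and the last edge $u_{k-1}u_k$ is a single edge of weight $v$. Since $k \geq 2$, the terminals are non-adjacent, so $F_k$ is a \ttg. Applying~\eqref{eq:effWdipole} to each two-edge dipole yields a single effective edge of weight $v_D := v(v+2)$, and then~\eqref{eq:effWpath} gives
\[
1 + \frac{q}{v_{F_k}} \;=\; \beta_D^{\,k-1}\,\beta_v, \qquad \beta_D := 1 + \tfrac{q}{v_D},\ \beta_v := 1 + \tfrac{q}{v}.
\]

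On Region V, $v < -2 < 0$ forces $v(v+2) > 0$ and hence $\beta_D > 1$ strictly, while $-q < v < 0$ forces $\beta_v < 0$ with $|\beta_v| > 0$. Consequently $\beta_D^{k-1}|\beta_v| \to \infty$ as $k \to \infty$, and I choose $k$ so that $\beta_D^{k-1}|\beta_v| > q - 1$. Setting $\beta := \beta_D^{k-1} \beta_v$, this yields $\beta < 1 - q$, and the identity $1 + v_{F_k} = (\beta + q - 1)/(\beta - 1)$ (obtained immediately from the previous display) places $1 + v_{F_k}$ in $(0,1)$ by a routine sign-and-magnitude check: both numerator and denominator are negative, and $|\beta + q - 1| < |\beta - 1|$ because $q > 0$. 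Therefore $F_k$ has type $B^+$ at $(q,v)$.

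The main obstacle was identifying the right construction: in the sub-region $v \leq -q/2$ the simple path $P_s$ fails, and pure parallel or series compositions of paths tend to give effective weights lying in types $A^\pm$ rather than $B$. The two-edge dipole is the essential ingredient, because it produces a factor $\beta_D > 1$ that amplifies the small quantity $|\beta_v|$ past the threshold $q - 1$; verifying this amplification is uniform enough to work for every $(q,v)$ in the open Region V reduces to the strict inequalities $\beta_D > 1$ and $|\beta_v| > 0$, which hold throughout because the boundary values $v = -2$ and $v = -q$ are excluded.
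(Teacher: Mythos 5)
Your proof is correct and uses essentially the same gadget as the paper: the paper takes $s$ copies of the two-edge dipole $F$ (with $v_F = v(v+2) > 0$) in series with one further edge of weight $v$, observes that $1 + q/v_G = (1 + q/v_F)^s (1 + q/v) \to -\infty$ as $s \to \infty$, and concludes that $v_G \in (-1,0)$ for suitable $s$. Your argument is the same construction (with $s = k-1$), made slightly more explicit via the threshold $\beta < 1 - q$ and the identity $1 + v_{F_k} = (\beta + q - 1)/(\beta - 1)$.
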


\begin{proof}
Let $F$ be the \dip having two edges of weight $v$. Note that by~\eqref{eq:effWdipole} we have $v_F = v(v+2)>0$. Now let $G$ be the \ttg consisting of $s$ copies of $F$ and one edge of weight $v$ in series. By~\eqref{eq:effWpath} we have that 
\begin{equation}\label{eq:seriesv_Fandv}
1 + \frac{q}{v_G} = \(1 + \frac{q}{v_F} \)^s \(1 + \frac{q}{v}\).
\end{equation}
By the conditions of the lemma, we have that $1+ q/v <0$. Since $1+q/v_F > 1$, the right hand side of~\eqref{eq:seriesv_Fandv} tends to minus infinity as $s$ tends to infinity. It follows that there is $s$ such that $-1 < v_G < 0$, and for this $s$, $G$ is a \ttg of type $B^+$ at $(q,v)$.
\end{proof}

\begin{lemma}\label{lem:A-toA+}
Let $F$ be a \ttg with effective weight $v_F$ and let $q$ be a real variable. Also let $F^{(2)}$ be the graph consisting of two copies of $F$ in parallel. If $v_F(q) < -2$, then $F^{(2)}$ has effective weight $v_{F^{(2)}}(q) > 0$. If $-2 < v_F(q) < -1$, then $F^{(2)}$ has effective weight $-1 < v_{F^{(2)}}(q) < 0$.
\end{lemma}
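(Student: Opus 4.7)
The plan is to reduce $F^{(2)}$ to a dipole on the terminal pair and then apply the dipole composition formula~\eqref{eq:effWdipole}. Concretely, I will first establish the parallel composition identity
\begin{equation*}
1 + v_{F^{(2)}}(q) = \bigl(1 + v_F(q)\bigr)^2,
\end{equation*}
after which the two claimed bounds follow from an elementary case analysis on the sign and magnitude of $1 + v_F(q)$.

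To prove the identity, label the two copies of $F$ in $F^{(2)}$ as $F_1$ and $F_2$, sharing only the terminal pair $\{x,y\}$. By Lemma~\ref{lem:effW} (with $F_1$ playing the role of $F$ and $F_2$ the role of $H$), the subgraph $F_1$ may be replaced by a single edge $xy$ of weight $v_{F_1} = v_F$ at the cost of a prefactor depending only on $F_1$. Repeating the same reduction on the copy $F_2$ leaves a \dip on $\{x,y\}$ with two parallel edges, each of weight $v_F$. Formula~\eqref{eq:effWdipole} then gives $1 + v_{F^{(2)}} = (1+v_F)(1+v_F) = (1+v_F)^2$; the two $F_i$-prefactors cancel from the ratio in~\eqref{eq:effW} that defines $v_{F^{(2)}}$.

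With the identity in hand, the two conclusions are immediate. If $v_F(q) < -2$, then $1 + v_F(q) < -1$, so squaring yields $(1+v_F(q))^2 > 1$, and hence $v_{F^{(2)}}(q) > 0$. If $-2 < v_F(q) < -1$, then $-1 < 1 + v_F(q) < 0$, so $0 < (1+v_F(q))^2 < 1$, giving $-1 < v_{F^{(2)}}(q) < 0$. The only substantive step is justifying the parallel composition rule via Lemma~\ref{lem:effW}; the remaining inequalities are trivial, and I do not anticipate any real obstacle.
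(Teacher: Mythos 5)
Your proposal is correct and follows essentially the same route as the paper's proof: reduce $F^{(2)}$ to the dipole with two parallel edges of weight $v_F$, apply~\eqref{eq:effWdipole} to get $1+v_{F^{(2)}}=(1+v_F)^2$ (equivalently $v_{F^{(2)}}=v_F(v_F+2)$ as the paper writes it), and finish with the elementary sign analysis. The paper simply asserts ``It may be verified that the effective weight of $F^{(2)}$ is equal to the effective weight of $D$,'' whereas you spell out the verification via Lemma~\ref{lem:effW}; that extra detail is welcome but does not change the argument.
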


\begin{proof}
Let $D$ denote the \dip with two edges of weight $v_F$. It may be verified that the effective weight of $F^{(2)}$ is equal to the effective weight of $D$. Thus, by~\eqref{eq:effWdipole}, we have $v_{F^{(2)}}(q) = v_F(q)(v_F(q)+2)$. If $v_F(q) < -2$, this is positive. If $-2 < v_F(q) < -1$, then $-1 < v_{F^{(2)}}(q) < 0$.
\end{proof}

In the following lemma we invoke Lemma 23 of~\cite{Gol.Jer.ComplexityTutte}, which uses the \ttg obtained from the Petersen graph by deleting an edge $xy$.

\begin{lemma}\label{lem:JGFlow}
If $2 < q < 4$ is non-integer and $v < -q$ then there is a \ttg of type $B^+$ at $(q,v)$. 
\end{lemma}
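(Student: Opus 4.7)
The plan is to invoke Lemma~23 of~\cite{Gol.Jer.ComplexityTutte} essentially as a black box. Let $P^{-}$ denote the \ttg obtained from the Petersen graph by deleting an edge $xy$, with $x$ and $y$ designated as terminals; since these vertices are non-adjacent in $P^{-}$, it is genuinely a \ttg.

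First, I would translate the statement of Lemma~23 of~\cite{Gol.Jer.ComplexityTutte} into the effective-weight language used throughout this section, via the definition in~\eqref{eq:effW}. The Goldberg--Jerrum lemma controls a sign condition on a specific evaluation of the multivariate Tutte polynomial of the Petersen graph, which is equivalent, by~\eqref{eq:effW}, to a statement about $v_{P^{-}}(q,v)$. What we need to extract is that $-1 < v_{P^{-}}(q,v) < 0$ for every $(q,v)$ with $2 < q < 4$, $q \neq 3$, and $v < -q$, which is precisely the condition that $P^{-}$ is of type $B^+$ at $(q,v)$.

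Second, if Lemma~23 does not yield the effective weight directly in the target interval $(-1, 0)$ but rather in a neighbouring range (for instance $v_{P^{-}} < -2$), I would follow the template of the proof of Lemma~\ref{lem:GJ(7)}: apply Lemma~\ref{lem:A-toA+} to two parallel copies of $P^{-}$ to obtain a \ttg of type $A^+$, and then combine with further parallel or series adjustments using~\eqref{eq:effWdipole} and~\eqref{eq:effWpath} in order to move the effective weight continuously into $(-1, 0)$. The hypothesis $v < -q$ is used here to control the sign of the factor $1 + q/v \in (0,1)$ appearing in~\eqref{eq:effWpath}, and the parity and magnitude of the resulting product can be tuned by the number of copies in the composition.

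The main obstacle is Lemma~23 of~\cite{Gol.Jer.ComplexityTutte} itself, which is treated as a black box; its proof requires an explicit calculation with the multivariate Tutte polynomial of the Petersen graph, and it is this calculation that delivers the non-trivial analytic input. The exclusion of $q = 3$ and the upper bound $q < 4$ are closely tied to the fact that the Petersen graph has no nowhere-zero $3$-flow and no nowhere-zero $4$-flow, respectively; indeed, it is the absence of an analogous \ttg in the regime $q > 4$ and $v < -q$ that prevents Theorem~\ref{thm:main} from establishing the full conjecture of Jackson and Sokal, as noted in the introduction.
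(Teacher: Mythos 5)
Your high-level plan is the same as the paper's (invoke Lemma~23 of Goldberg--Jerrum for the Petersen-minus-edge gadget, then adjust the effective weight by parallel/series composition), but you never pin down what Lemma~23 actually delivers, and as a result the genuinely non-trivial part of the proof is missing. The paper extracts from Lemma~23 only that there is a \ttg $F$ with $-q < v_F(q,v) < 0$, not your hoped-for $-1 < v_F < 0$; the whole content of the proof is then a case analysis on where $v_F$ sits inside $(-q, 0)$, and this is exactly what your proposal leaves vague.

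Two concrete problems. First, your fallback for the case $v_F < -2$ heads in the wrong direction: putting two copies of $F$ in parallel gives $v_{F^{(2)}} = v_F(v_F+2) > 0$, which is type $A^+$, whereas you need type $B^+$. What the paper does instead is re-run the construction of Lemma~\ref{lem:GJ(7)} with the edge of weight $v$ replaced by the gadget $F$: since $q > 2$ and $-q < v_F < -2$, the argument of Lemma~\ref{lem:GJ(7)} applied at effective weight $v_F$ produces a \ttg of type $B^+$ at $(q,v)$. The parallel copy $F^{(2)}$ is used as the dipole-analogue inside that construction, not as the final answer. Second, you omit the remaining cases entirely: when $-2 < v_F < -1$, Lemma~\ref{lem:A-toA+} \emph{does} give $-1 < v_{F^{(2)}} < 0$ directly; and the boundary cases $v_F = -1$ and $v_F = -2$ each need a bespoke series construction (the paper uses $J_3$ when $v_F = -1$, and a long odd series chain plus Lemma~\ref{lem:GJ(7)} when $v_F = -2$). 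Saying you would ``move the effective weight continuously into $(-1,0)$'' does not by itself justify that any of this lands in the right interval; each case hinges on a specific inequality that has to be checked. Until that case analysis is spelled out, the proposal has the right strategy but no proof.
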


\begin{proof}
By the argument in Lemma 23 of~\cite{Gol.Jer.ComplexityTutte}, there is a \ttg $F$ satisfying $-q < v_F < 0$. If $-1 < v_F < 0$ then the result follows immediately. If $-q < v_F < -2$, then the result follows by Lemma~\ref{lem:GJ(7)}. If $-2 < v_F < -1$, then by Lemma~\ref{lem:A-toA+}, the \ttg $F^{(2)}$ formed by taking two copies of $F$ in parallel has effective weight $v_{F^{(2)}}$ satisfying $-1 < v_{F^{(2)}} < 0$ as required. Thus, it just remains to consider the cases when $v_F \in \{-1, -2\}$.

Let $J_s$ denote the graph consisting of $s$ copies of $F$ in series. The effective weight of $J_s$ is equal to the effective weight of $P_s$, where $P_s$ denotes the path with $s$ edges of weight $v_F$. Suppose $v_F = -1$. We have $v_{J_3} = \frac{-1}{q^2 - 3q + 3}$. It may be checked that for $q>2$ we have $-1 < v_{J_{3}} < 0$, so $J_3$ has type $B^+$ as required. Now suppose that $v_F = -2$. By~\eqref{eq:effWpath}, we have $1+ \frac{q}{v_{J_{s}}} = (1 - \frac{q}{2})^s$. Since $2 < q < 4$, we have that $-1< 1 - \frac{q}{2} < 0$, and so for any $\varepsilon > 0$, there exists a large and odd $s$ such that $-q< v_{J_{s}} < -q+ \varepsilon$. Thus we can ensure that $-q < v_{J_s} < -2$. The result now follows by an application of Lemma~\ref{lem:GJ(7)}.
\end{proof}

The following lemma uses a gadget based on large complete graphs and consequently, the resulting \ttg is non-planar.

\begin{lemma}\label{lem:JG(T)}\emph{\cite[Lemma 18]{Gol.Jer.ComplexityTutte}}
If $q > 2$ and $-1 < v < 0$ then there is a \ttg of type $A^-$ or $B^-$ at $(q,v)$. 
\end{lemma}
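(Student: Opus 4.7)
The plan is to use a non-planar \ttg built from the complete graph $K_n$. Non-planarity is essential here: one can show that series-parallel compositions of single edges of weight $v \in (-1, 0)$ at $q > 2$ cannot produce types $A^-$ or $B^-$ (parallel composition preserves $v_F > -1$, while series composition forces $|1+q/v_F| \geq q-1$ whenever any factor comes from a subgraph of type $B^+$, avoiding the range $(1-q, -1)$ needed for $v_F < -1$). Hence I would take $F_n := K_n - xy$ as the base \ttg, with all remaining edges of weight $v$.

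The key computation is at $v = -1$: since each pair of parallel edges of weight $-1$ has effective weight $-1$, multiple edges in $(F_n)_{xy}$ collapse to single ones, and \eqref{eq:effW} yields
\[
1 + v_{F_n}(q, -1) = \frac{(q-1) P_{K_{n-1}}(q)}{P_{K_n}(q)} = \frac{q-1}{q-n+1},
\]
interpreted as a rational function in $q$. For any $q > 2$ and integer $n > q + 1$, this expression is strictly less than $-1$, placing $F_n$ in type $A^-$ or $B^-$ at $(q, -1)$. To extend to arbitrary $v \in (-1, 0)$, I would replace each edge of $F_n$ by a dipole of $k$ parallel edges of weight $v$, obtaining a gadget $H_{k, n}$. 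By \eqref{eq:effWdipole} and Lemma~\ref{lem:effW}, the effective weight of $H_{k, n}$ equals $v_{F_n}(q, v')$ with $v' = (1+v)^k - 1$. Since $(1+v) \in (0, 1)$, we have $v' \to -1^+$ as $k \to \infty$, and by continuity of the rational function $v_{F_n}(q, \cdot)$ near $v' = -1$, the gadget $H_{k, n}$ satisfies $v_{H_{k, n}}(q, v) < -1$ for $k$ sufficiently large, giving type $A^-$ or $B^-$ at $(q, v)$ as desired.

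The main obstacle is handling integer values of $q > 2$: the chromatic denominator $P_{K_n}(q)$ vanishes for integer $q$ with $n > q$, creating a formal $0/0$ at $v' = -1$. This is resolved by interpreting $(q-1)/(q-n+1)$ as the ratio of polynomials in $q$ after cancellation, which agrees with $\lim_{v' \to -1^+} v_{F_n}(q, v')$ computed via the leading Taylor coefficients of $Z_{F_n + xy}$ and $Z_{(F_n)_{xy}}$ in $v' + 1$; concretely, choosing $n = q + 2$ for integer $q$ gives $v_{F_n}(q, -1) = -q < -2$, firmly in type $A^-$, and the dipole substitution then covers the full region.
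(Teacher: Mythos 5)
The paper does not prove this lemma itself; it is cited directly from Goldberg and Jerrum (their Lemma~18), with only the remark that the construction "uses a gadget based on large complete graphs." Your choice of $K_n - xy$ as the base gadget is therefore in the right family, and for \emph{non-integer} $q > 2$ the computation $1 + v_{F_n}(q,-1) = \frac{(q-1)P_{K_{n-1}}(q)}{P_{K_n}(q)} = \frac{q-1}{q-n+1}$ is correct and gives a negative value for $n > q+1$, so the dipole-substitution/continuity argument goes through there. (Minor slip: for $n \geq 2q$ this quantity lies in $(-1,0)$ rather than below $-1$, but that is still type $B^-$, so the conclusion is unaffected.)

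The genuine gap is in the integer case, and the proposed fix does not work. For integer $q_0$ with $q_0 < n-1$ the numerator and denominator both vanish at $v' = -1$, but to \emph{different orders} in $w := 1+v'$. Using the Potts form $Z_{F_n+xy}(q_0, v', -1) = \sum_{\sigma(x)\neq\sigma(y)} w^{m(\sigma)}$ and $Z_{(F_n)_{xy}}(q_0, v') = \sum_{\sigma(x)=\sigma(y)} w^{m(\sigma)}$, where $m(\sigma)$ counts monochromatic edges of $F_n$, the order of vanishing in $w$ is the minimum of $m(\sigma)$ over the corresponding class. For $q_0 = n-2$ (your suggested $n = q+2$), the minimum over $\sigma(x)=\sigma(y)$ is $1$ (colour the $a_i$'s with the $q_0-1$ colours distinct from $\sigma(x)$, forcing one repeated pair), while the minimum over $\sigma(x)\neq\sigma(y)$ is $2$ (the Turán partition of $K_n$ into $n-2$ parts has two bad pairs). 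Since $q_0-1>0$ and the leading coefficients are positive counts of colourings, $\lim_{v'\to -1^+}\bigl(1+v_{F_n}(q_0,v')\bigr) = +\infty$, \emph{not} the finite value $1-q_0$ obtained by formally cancelling the rational function in $q$. The two limits disagree because $(q_0,-1)$ is an indeterminate point of the two-variable rational function, and the limit along $q=q_0$ is not the value on the line $v'=-1$. In fact the leading behaviour puts $H_{k,n}$ firmly in type $A^+$ for large $k$ at integer $q$, the opposite of what you need, so the construction cannot simply be pushed through by Taylor expansion. A correct proof has to handle integer $q$ by a different mechanism (as Goldberg and Jerrum do); alternatively, one could restrict the lemma to non-integer $q$ and invoke the fact that removing a measure-zero set of $q$-values does not affect density in an open region, but that would be a change to the statement rather than a proof of it.
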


Recall that $\vplus(q)$ is the function describing the middle branch of the curve $v^3 - 2qv - q^2 = 0$ for $0 < q < 32/27$, and that $\vmin(q)$ is defined by $\vmin(q) = q/ \vplus(q)$ for $0 < q < 32/27$.

\begin{lemma}\label{lem:v-}
If $0 < q < 32/27$ and $-2 < v < \vmin$, then there is a \ttg $F$ of type $A^+$ at $(q,v)$. Furthermore, we can choose $F$ such that $F+xy$ is planar.
\end{lemma}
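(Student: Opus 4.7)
My plan is to use a ``lozenge'' gadget $L$: the two-terminal graph with three vertices $x, a, y$ and four edges (two parallel $xa$-edges and two parallel $ay$-edges), i.e., two two-edge \dips in series. Then $L+xy$ is planar, so the planarity requirement is automatic for any series-parallel iterate of $L$. Applying~\eqref{eq:effWdipole} and then~\eqref{eq:effWpath} gives the effective weight
\[
v_L(q,v) = \psi(v) := \frac{(v^2+2v)^2}{2v^2+4v+q}.
\]
I will define the nested gadget $L_n$ recursively by $L_0=$ a single edge and $L_{n+1}=L$ with each of its four primitive edges replaced by a copy of $L_n$. Each $L_n$ is series-parallel, $L_n+xy$ is planar, and by composition $v_{L_n}(q,v)=\psi^n(v)$.

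The key observation I would exploit is that $\vmin$ is a \emph{repelling} fixed point of $\psi$ throughout $q\in(0,32/27)$. Indeed, since $\vplus$ is a root of $v^3-2qv-q^2$, the substitution $w=q/v$ shows that $\vmin=q/\vplus$ satisfies $w^3+2w^2-q=0$, and one checks directly that the nonzero fixed points of $\psi$ are exactly the roots of this polynomial. A short computation with $u=v^2+2v$ gives $\psi'(v)=2u'u(u+q)/(2u+q)^2$; at the fixed point $v=\vmin$ one may use the identity $q=v^2(v+2)$ to simplify this to $\psi'(\vmin)=\bigl(2(\vmin+1)/(\vmin+2)\bigr)^2$. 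Since $\vmin$ lies in $(-2,-4/3)$ for $q\in(0,32/27)$, this quantity exceeds $1$ strictly.

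Let $v_-=-1-\sqrt{1-q/2}$ be the smaller root of the denominator of $\psi$, and let $v^*\in(v_-,\vmin)$ denote the unique preimage of $-2$ in that interval. A direct sign analysis shows that $\psi$ maps $(-2,v_-)$ into $(0,\infty)$, maps $(v_-,v^*)$ into $(-\infty,-2)$, and maps $(v^*,\vmin)$ monotonically onto $(-2,\vmin)$. If $v\in(-2,v_-]$ I take $F=L$; if $v\in[v_-,v^*]$ I take $F=L^{(2)}$ (two copies of $L$ in parallel), which is of type $A^+$ by Lemma~\ref{lem:A-toA+}; if $v\in(v^*,\vmin)$ the repelling property forces the nested intervals $\psi^{-n}([v^*,\vmin])\cap[v^*,\vmin]$ to shrink to $\{\vmin\}$, so for some finite $n$ one has $\psi^n(v)\in(-2,v^*)$, whence $\psi^{n+1}(v)>0$ or $\psi^{n+1}(v)<-2$, and I take $F=L_{n+1}$ or $F=L_{n+1}^{(2)}$ accordingly. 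The hard part will be the uniform bound $|\psi'(\vmin)|>1$ on the open parameter range; the explicit formula above makes this transparent, but deriving it requires the fixed-point identity $q=v^2(v+2)$. A secondary issue is the countable set of $v$ for which some iterate $\psi^n(v)$ lands exactly at $-2$, $0$, or $v_-$, which can be handled by substituting an alternative base gadget (e.g.\ two three-edge \dips in series) whose corresponding iteration has a disjoint degenerate set.
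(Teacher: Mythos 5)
Your proof takes a genuinely different, and more self-contained, route than the paper. The paper's argument is very short: it applies the parallel-doubling map $v\mapsto v(v+2)$ once to convert a point with $-2<v<\vmin$ into a gadget of effective weight in $(\vplus,0)$ (using the identity $\vmin(\vmin+2)=\vplus$), and then cites Lemmas~8.5(a),(b) of Jackson and Sokal to produce the planar type-$A^+$ gadget from there; the real work is outsourced to that reference. You instead re-derive everything from scratch by iterating the lozenge map $\psi(v)=\frac{(v^2+2v)^2}{2v^2+4v+q}$ (one parallel-doubling followed by one series-doubling), observing that the nonzero fixed points of $\psi$ are the roots of $v^3+2v^2-q$, that $\vmin$ is the unique fixed point in $(-2,-4/3)$ and satisfies $\psi'(\vmin)=\bigl(2(\vmin+1)/(\vmin+2)\bigr)^2>1$, and that iterates starting in $(v^*,\vmin)$ therefore escape into $(-2,v^*)$, after which one more step lands in $(0,\infty)$ or $(-\infty,-2)$; the last case is handled by Lemma~\ref{lem:A-toA+}. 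I checked the computations: the formula for $\psi$, the fixed-point equation, the derivative identity at $\vmin$, and the monotonicity of $\psi$ on $(v_-,\vmin)$ (which hinges on $u+q>0$ there; since $u(v_-)=-q/2$ and $u(\vmin)=q/\vmin$ are both positive and the minimum of $u+q$ occurs at $v=-1>\vmin$, this holds) all work out, so the escape argument is sound—indeed, the derivative bound is not even strictly needed, as the absence of a fixed point in $(v^*,\vmin)$ plus monotonicity already forces escape. Your approach buys independence from the external Jackson--Sokal lemma at the cost of a longer dynamics analysis. The only loose end is the one you flag yourself: the countable set of $v$ for which an iterate hits $v_-$ (pole of $\psi$) or $-2$; note that you don't actually need to construct an alternative gadget family, since Corollary~\ref{cor:strategy} together with the perturbation in Lemma~\ref{lem:rootFind} only requires a complementary pair on a dense subset of the region, and your construction covers a full-measure, hence dense, subset. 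Also, your ``if $v\in(-2,v_-]$'' should be the open interval $(-2,v_-)$, since at $v=v_-$ the denominator of $\psi$ vanishes and $L$ has no well-defined effective weight.
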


\begin{proof}
Let $H$ be the graph consisting of two edges of weight $v$ in parallel. We claim that $v_H > \vplus (q)$. By~\eqref{eq:effWdipole}, the effective weight $v_H$ of $H$ satisfies $v_H = v(v+2)$, which is a decreasing function of $v$ for $-2 < v < \vmin$. Note that $\frac{q}{v} ( \frac{q}{v} + 2 ) = v$ is precisely the equation satisfied by $\vplus(q)$. Thus $\vmin(q) ( \vmin(q)  + 2 ) = \vplus(q)$ for $0 < q < 32/27$. Since $v < \vmin $, it follows that $v_{H} = v(v+2) > \vmin(q) ( \vmin(q)  + 2 ) = \vplus(q)$ as claimed. Now by Lemmas~8.5(a) and 8.5(b) in~\cite{Jac.Sok.ZeroFreeMultiTutte}, there is a \ttg obtained from $H$ which has type $A^+$.
\end{proof}

\begin{lemma}\label{lem:GJ(8)}\emph{\cite[Lemma 12]{Gol.Jer.ComplexityTutte}}
If $q > 32/27$ and $-2 < v < -q/2$, then there is a \ttg $F$ of type $A^+$ at $(q,v)$. Furthermore, we can choose $F$ such that $F+xy$ is planar.
\end{lemma}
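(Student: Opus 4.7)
The plan is to follow the template of Lemma~\ref{lem:v-}, which handles the mirror range $0 < q \leq 32/27$: produce a simple parallel gadget with strictly negative effective weight, then invoke Jackson and Sokal's Lemma~8.5 to amplify it into the type-$A^+$ regime while preserving planarity. Let $H$ consist of two parallel edges of weight $v$, so by~\eqref{eq:effWdipole} we have $v_H = v(v+2)$. For $-2 < v < 0$ this takes values in $[-1, 0)$, so in particular $v_H < 0$ throughout the range of interest. (Note that the hypothesis $-2 < v < -q/2$ forces $q < 4$, so the genuine range I must cover is $32/27 < q < 4$.)

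For the sub-range $2 \leq q < 4$, a direct argument avoiding Jackson and Sokal already suffices. Consider the path $P_2$ of two edges of weight $v$. Since $1+q/v \in (-1,\, 1-q/2) \subseteq (-1,0)$ for $q \geq 2$, equation~\eqref{eq:effWpath} gives $v_{P_2} = q/((1+q/v)^2 - 1) < -q \leq -2$. Lemma~\ref{lem:A-toA+} applied to $P_2$ then produces a planar \ttg $P_2^{(2)}$ of type $A^+$; its $+xy$ completion is a four-cycle with its diagonal added, hence planar.

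For the remaining sub-range $32/27 < q < 2$ the simple $P_s^{(2)}$ argument breaks down: as $v$ approaches $-q$ from within $(-2, -q/2)$, the quantity $v_{P_s}$ stays in $(-\infty, -q]$ and never drops below $-2$, so Lemma~\ref{lem:A-toA+} cannot be applied. Here I would invoke Jackson and Sokal's Lemma~8.5 applied to the input weight $v_H$: above the critical value $q = 32/27$ the cubic $v^3 - 2qv - q^2$ has only one real root and the obstruction threshold $\vplus(q)$ is no longer defined, so I expect every negative input to be admissible for their planar construction, yielding a planar \ttg of type $A^+$.

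The principal obstacle is verifying that Jackson and Sokal's Lemma~8.5 genuinely extends to $q > 32/27$ without getting stuck at an intermediate value in $(-1, 0)$ when $v_H$ is close to $-1$. If their construction does not transfer cleanly in this regime, the fallback is to build a planar gadget from scratch by alternating~\eqref{eq:effWdipole} and~\eqref{eq:effWpath}: use parallel doubling to shift the effective weight across $-1$, then series compose with an additional weight-$v$ edge to reach the type-$A^+$ range, case-splitting on $v_H \in \{-1\}$, $v_H \in (-1,0)$ separately in the spirit of the proof of Lemma~\ref{lem:JGFlow}.
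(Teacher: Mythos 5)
The paper does not give its own proof of this lemma---it is cited verbatim as Lemma~12 of Goldberg and Jerrum---so your attempt is a reconstruction from scratch rather than a comparison against an in-paper argument.

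Your sub-case $2 \le q < 4$ is correct and is a pleasantly elementary shortcut. For $q \ge 2$ and $-2 < v < -q/2$ one has $1 + q/v \in (-1,\, 1-q/2) \subseteq (-1, 0]$, hence $(1+q/v)^2 - 1 \in (-1,0)$, and by~\eqref{eq:path} the path $P_2$ satisfies $v_{P_2} < -q \le -2$. Lemma~\ref{lem:A-toA+} then gives the \ttg $P_2^{(2)}$ of type $A^+$, and $P_2^{(2)}+xy$ is a $4$-cycle with a chord, hence planar.

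The sub-case $32/27 < q < 2$ is a genuine gap, which you acknowledge. Jackson and Sokal's Lemma~8.5, used in the paper's Lemma~\ref{lem:v-}, is formulated relative to the threshold $\vplus(q)$, which exists only for $0 < q < 32/27$; the ``expectation'' that their construction carries over beyond $q = 32/27$ is precisely the claim one has to prove, not a consequence of their statement. The proposed fallback (alternating series and parallel compositions) is only sketched and it is not clear it terminates: for instance as $v \to -2^{+}$ with $q$ fixed in $(32/27, 2)$, one computes $v_{P_2} \to 4/(q-4) \in (-2, -27/19)$, so $P_2$ is not of type $A^-$ and Lemma~\ref{lem:A-toA+} does not apply, nor does parallel doubling escape $(-1,0)$. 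Supplying a planar gadget for this range is exactly the content of Goldberg and Jerrum's Lemma~12, and your proof does not reproduce it.
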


We now combine the results of this section and Section~\ref{sec:Strat} to prove Theorem~\ref{thm:main}.

\begin{proof}[Proof of Theorem~\ref{thm:main}]
We first show that the zeros of the Tutte polynomial are dense in regions I - IX. By Corollary~\ref{cor:strategy}, it suffices to show that for each point $(q,v)$ in Regions I - IX, there exists a complementary pair of graphs. In regions I - V, a single edge of weight $v$ has type $A^-$. By Lemma~\ref{lem:A-toA+}, the graph consisting of two such edges in parallel has type $A^+$. Thus, in regions I - V, it only remains to find a \ttg of type $B^+$ or $B^-$.

\begin{list}{}{}
\item[Region I:]  By Lemma~\ref{lem:paths}\ref{paths:(5)}, there is a \ttg of type $B^+$.
\item[Region II:] By Lemma~\ref{lem:paths}\ref{paths:(SM1)}, there is a \ttg of type $B^+$.
\item[Region III:] By Lemma~\ref{lem:paths}\ref{paths:(SM2)}, there is a \ttg of type $B^-$.
\item[Region IV:] By Lemma~\ref{lem:JGFlow}, there is a \ttg of type $B^+$.
\item[Region V:] By Lemma~\ref{lem:GJ(7)}, there is a \ttg of type $B^+$.

\end{list}

We deal with the remaining regions individually.

\begin{list}{}{}
\item[Region VI:] For $(q,v)$ in region VI, a single edge of weight $v$ has type $B^-$. By Lemma~\ref{lem:paths}\ref{paths:(VI)}, there exists a \ttg of type $A^-$. By Lemma~\ref{lem:A-toA+}, taking two copies of this graph in parallel gives a \ttg of type $A^+$ as required.
\item[Region VII:] A single edge of weight $v$ has type $B^+$. By Lemma~\ref{lem:JG(T)} there is a \ttg $F$ of type $A^-$ or $B^-$ at $(q,v)$. If $F$ has type $A^-$ then we are done. If $F$ has type $B^-$, then the effective weight $v_F$ of $F$ satisfies $-2 < v_F < -1$. Thus, the point $(q,v_F)$ lies in region VI or V$^*$. If $(q,v_F) \in VI$, then we use the argument for region VI to obtain a \ttg of type $A^+$. If $(q,v_F) \in \text{V}^*$, then we use Lemma~\ref{lem:paths}\ref{paths:(1)} to obtain a \ttg of type A$^+$.
\item[Region VIII:] A single edge of weight $v$ has type $B^-$. By Lemma~\ref{lem:v-}, there is a \ttg of type $A^+$ at $(q,v)$.
\item[Region IX:] A single edge of weight $v$ has type $B^-$. By Lemma~\ref{lem:GJ(8)}, there is a \ttg of type $A^+$ at $(q,v)$.
\end{list}

We note that in regions I, II, III, V, VIII and IX, each complementary pair that we use is planar. Thus by Corollary~\ref{cor:duality}, the zeros of the Tutte polynomials of planar graphs are also dense in the regions I$^*$, II$^*$, III$^*$, V$^*$, VIII$^*$ and IX$^*$.
\end{proof}

We briefly remark on the region in which we have been unable to prove density, namely the points satisfying $q > 4$ and $v < -q$. For $(q,v)$ in this region, the sequence of paths $P_s$, $s \in \mathbb{N}$ have effective weights converging to the point $-q$. Along the line $v = -q$, the multivariate Tutte polynomial is nothing other than the flow polynomial $F_G(q)$ multiplied by a prefactor. Goldberg and Jerrum have shown that if $G$ is a graph and $xy \in E(G)$ such that $F_G(q)$ and $F_{G-xy}(q)$ have opposite signs, then it is possible to implement a weight $v'$ satisfying $-q < v' < 0$. Using an argument similar to that of Lemma~\ref{lem:JGFlow}, it would then be possible to find a \ttg which has type $B^+$ at $(q,v)$. It is conjectured~\cite{Jac.Sal.False} that there exists $q_0 \in \mathbb{R}$ such that $F_G(q) > 0$ for all $2$-connected graphs $G$ and all $q > q_0$. Thus it seems unlikely that this technique can be used to prove density for all $q > 4$.

The dual of the unsolved region lies inside region VII. Unfortunately, the graphs we used to prove density in region VII are non-planar, and so we cannot use duality as we have done above. However, if we allow ourselves to use all matroids instead of all graphs then we can apply the duality argument, since every matroid has a dual. It is easy to define the Tutte polynomial for matroids by replacing the term $q^{k(A)}$ with $q^{r(G) - r(A) + 1}$ where $r$ is the rank function, see~\cite{Sok.MultiTutte}. 

\section{Acknowledgements}
The authors would like to thank Schloss Dagstuhl – Leibniz Center for Informatics and the organisers of the ``Graph Polynomials" seminar held June 12 – 17, 2016, where this work was initiated.

\begin{comment}
\section{Open}
Our density corresponds to the density of the zeros of the chromatic polynomial ($v = -1$). Can we say something about other specializations, e.g.:
\begin{enumerate}
\item flow polynomial ($q = -v$)
\item Potts model partition function ($q=n$, $n =2, 3, \ldots$)
\item Martin polynomial ($q = v^2$)
\item Jones polynomial (??)
\end{enumerate}

Can we prove the density along every algebraic curve? or at least curves parametrized by rational functions?

The lemmas immediately imply that both signs appear in the regions, arbitrarily closely to every point. Can we say something abut the missing region of Jerrum \& Goldberg?

What could be further directions?
\vspace{5mm}
\end{comment}

\bibliographystyle{plain}                     
\bibliography{References}   
\end{document}